\newtheorem{theorem}{Theorem}
\newtheorem{lemma}{Lemma}
\newtheorem{corollary}{Corollary}
\newtheorem{proposition}{Proposition}
\theoremstyle{definition}
\newtheorem{definition}{Definition}
\newtheorem{example}{Example}
\theoremstyle{remark}
\newtheorem{remark}{Remark}
\newcommand{\Z}{\mathbb Z}
\newcommand{\V}{{\mathcal V}}
\newcommand{\bega}{\left(\begin{array}}
\newcommand{\ena}{\end{array}\right)}
\newenvironment{block}{}{}
\date{}
\title{Parity on based matrices}
\author{Igor Nikonov}
\begin{document}

\maketitle

\begin{abstract}
A parity is a labeling of the crossings of knot diagrams which is compatible with
Reidemeister moves. We define the notion of parity for based matrices --- algebraic objects introduced by V.~Turaev~\cite{T} in his research of virtual strings. We present the reduced stable parity on based matrix which gives a new example of a parity of virtual knots.
\end{abstract}

\section*{Introduction}

V.O. Manturov~\cite{M1} defined a parity as a rule to assign labels $0$ and $1$ (considered as elements of $\Z_2$) to the crossings of knot diagrams in a way compatible with Reidemeister moves. Capability to distinguish odd and even crossings allows to treat the crossings differently when transforming knot diagrams or calculating their invariants.
The notion of parity has proved to be an effective tools in knot theory. It allows to strengthen knot invariants, to prove minimality theorem and to construct (counter)examples~\cite{IMN}.

Then a natural question is to describe existing parities on virtual knot. So far the only known parity on virtual knots was the Gaussian parity~\cite{M1}. On the other hand, virtual knots can be viewed as knots in surfaces modulo isotopy, Reidemeister moves and (de)stabilization. The parities for knots in a fixed surface are well known~\cite{IMN}. They comes from homology of the underlying surface: the parity of a knot crossing is the homology class of a knot half at the crossing. This is the point where a link to the based matrices is established.

Based matrices introduced by V.~Turaev~\cite{T} in his research of virtual strings (also known as flat knots). One can assign to a diagram of a flat knot the matrix whose elements are intersection numbers of diagram halves at the crossing of the diagram. Reidemeister moves induce transformations of based matrices. The reduced form of based matrices is a powerful invariant of flat knots, and some other known invariants can be deduced from it.

Later analogues of based matrices were introduced for virtual knots~\cite{T2}, singular flat knots~\cite{C,H}, framed and long virtual knots~\cite{P} and virtual links~\cite{F}.

The aim of the present paper is to define parity on based matrices and find new parities for flat and virtual knots.

The paper is structured as follows. Section~\ref{sect:definition} recalls definitions of flat knots, parity and based matrices. In Section~\ref{sect:parity_based_matrices} we introduce the definition of parity and parity functor on based matrices and present an example of such a parity --- stable parity. The stable parity determines a new parity on flat and virtual knots.

\section{Definitions}\label{sect:definition}

\subsection{Virtual and flat knots}

A {\em  $4$-graph} is any union of four-valent graphs and {\em trivial components}, i.e. circles  considered as graphs without vertices and with one (closed) edge. A {\em virtual diagram} is an embedding of a $4$-graph into plane so that each vertex of the graph is marked as either {\em classical} of {\em virtual} vertex. At a classical vertex one a pair of opposite edges (called {\em overcrossing}) is chosen. The other pair of opposite edges at the vertex is called {\em undercrossing}. The vertices of the diagram are called also {\em crossings}.

Virtual crossings of a virtual diagram  are usually drawn circled. The undercrossing of a classical vertex is drawn with a broken line whereas the overcrossing is drawn with a solid line (see Fig.~\ref{fig:virtual_trefoil}). A diagram without virtual crossings is {\em classical}.

\begin{figure}[h]
\centering
\includegraphics[width=0.15\textwidth]{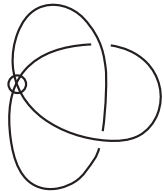}
\caption{Virtual trefoil diagram with two classical and one virtual crossings}\label{fig:virtual_trefoil}
\end{figure}

\begin {definition}
A {\em virtual link}~\cite{K} is an equivalence class of virtual diagrams modulo Reidemeister moves and detour moves (Fig.~\ref{fig:reidemeister_moves}).

\begin{figure}[h]
\centering
\includegraphics[width=0.3\textwidth]{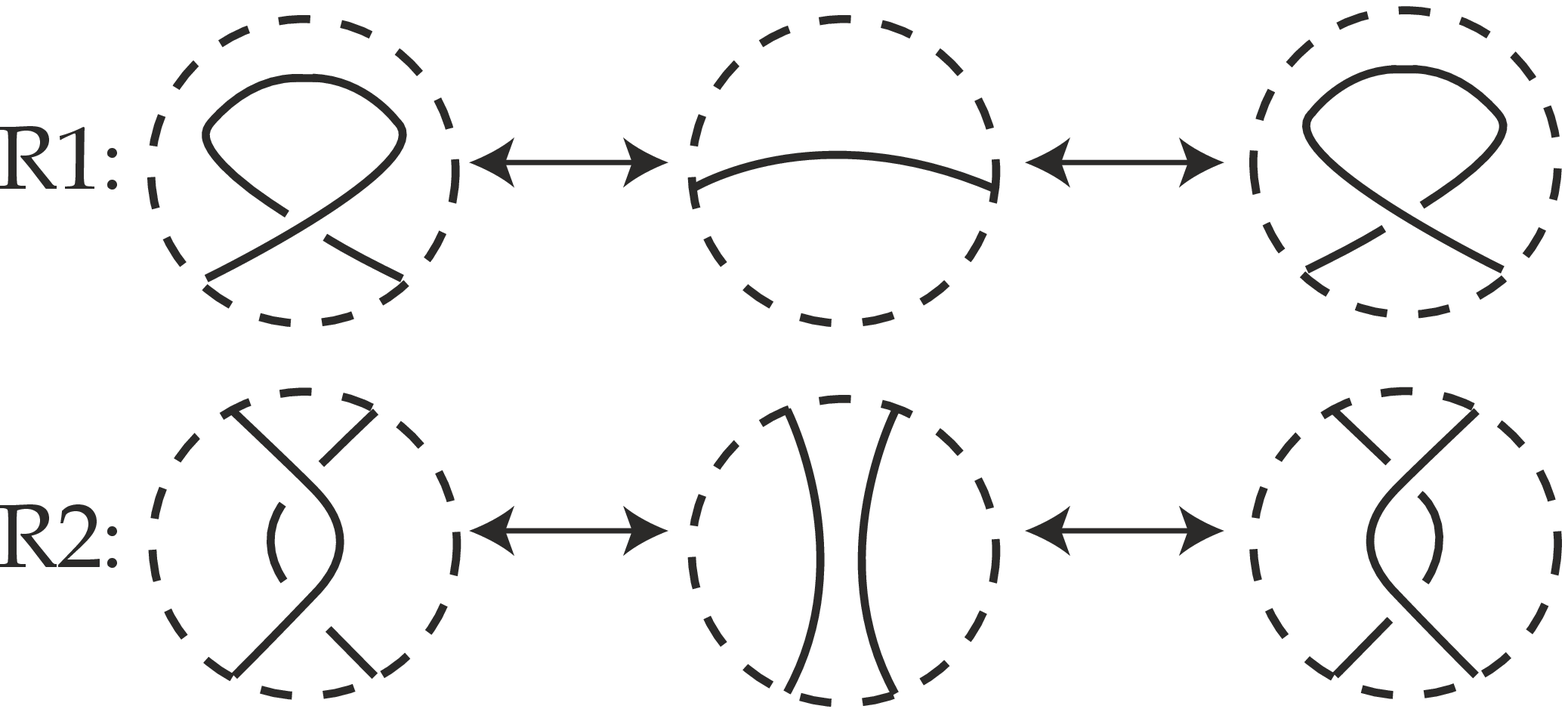}\qquad
\includegraphics[width=0.3\textwidth]{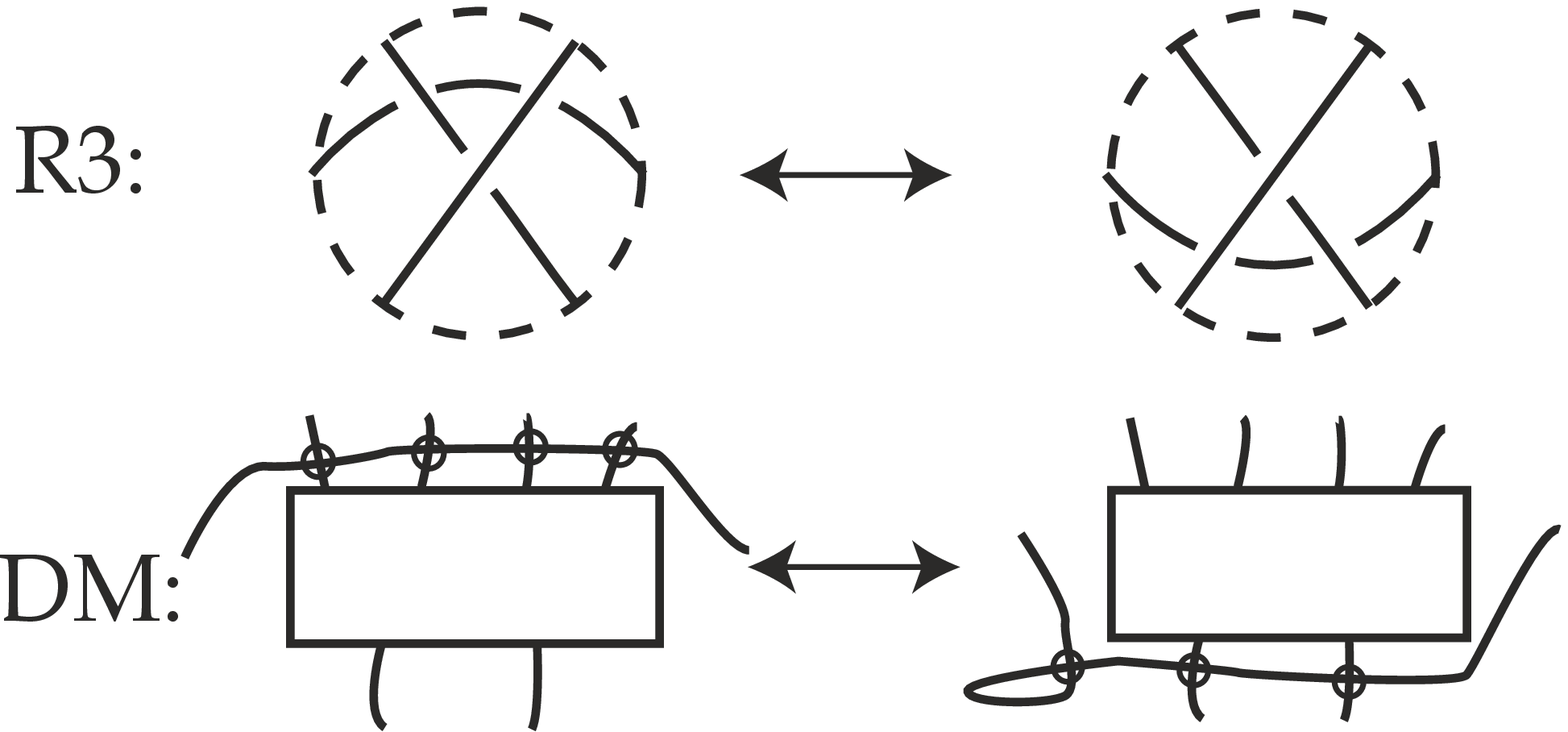}
\caption{Reidemester moves and the detour move}\label{fig:reidemeister_moves}
\end{figure}
\end {definition}

A {\em unicursal component} is a minimal set of diagram edges which is closed under passing from an edge to its opposite (at some end of the edge) edge. A diagram with one unicursal component is called a {\em diagram of a virtual knot}.

Below we assume that the unicursal components of diagrams are oriented.

There is another description of virtual knots.

A {\em virtual link} can be viewed as an equivalence class of pairs $(S,D)$ where $S$ is a closed oriented surface and $D$ is a diagram in $S$ whose crossings are all classical~\cite{KK}. The equivalence relation is generated by diagram isotopies, classical Reidemeister moves and stabilizations (see Fig.~\ref{fig:stabilization}) which change the surface.

\begin{figure}[h]
\centering
  \includegraphics[width=0.5\textwidth]{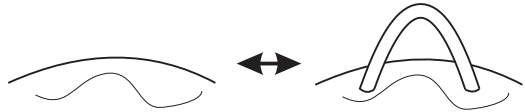}\\
  \caption{Stabilization move}\label{fig:stabilization}
\end{figure}

For example, the virtual trefoil can be given by a diagram in the torus (Fig.~\ref{fig:virtual_trefoil_surface}).

\begin{figure}[h]
\centering
\includegraphics[width=0.25\textwidth]{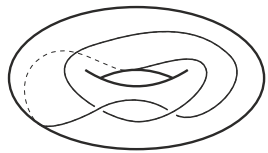}
\caption{Virtual trefoil}\label{fig:virtual_trefoil_surface}
\end{figure}

If one excludes the stabilization moves she gets a knot theory in a given surface.

If one admits crossing switch transformations (Fig.~\ref{fig:crossing_switch}) of virtual diagrams, i.e. neglects the over-undercrossing structure, one gets the theory of {\em flat knots}.

\begin{figure}[h]
\centering
\includegraphics[width=0.25\textwidth]{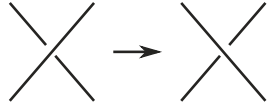}
\caption{Crossing switching}\label{fig:crossing_switch}
\end{figure}

Equivalently, flat knots (links) are equivalence classes of {\em flat diagrams} ($4$-valent graphs whose vertices are virtual crossing and classical crossings without under-overcrossing structure, Fig.~\ref{fig:flat_knot_diagram}) modulo flat Reidemeister moves (Fig.~\ref{fig:flat_reidemeister_moves}).

\begin{figure}[h]
\centering
\includegraphics[width=0.15\textwidth]{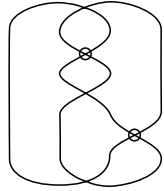}
\caption{Flat knot diagram}\label{fig:flat_knot_diagram}
\end{figure}

\begin{figure}[h]
\centering
\includegraphics[width=0.3\textwidth]{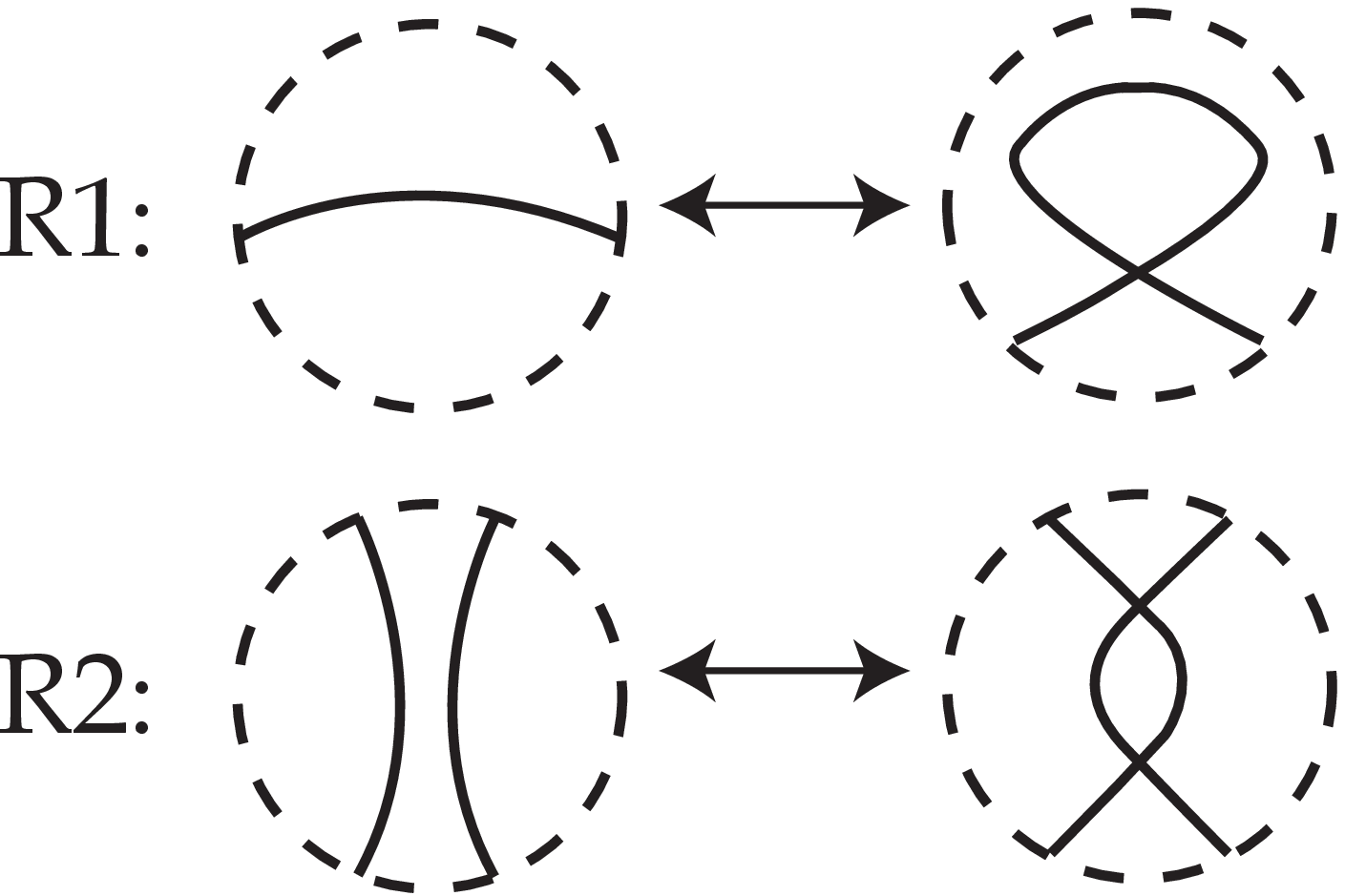}\qquad
\includegraphics[width=0.42\textwidth]{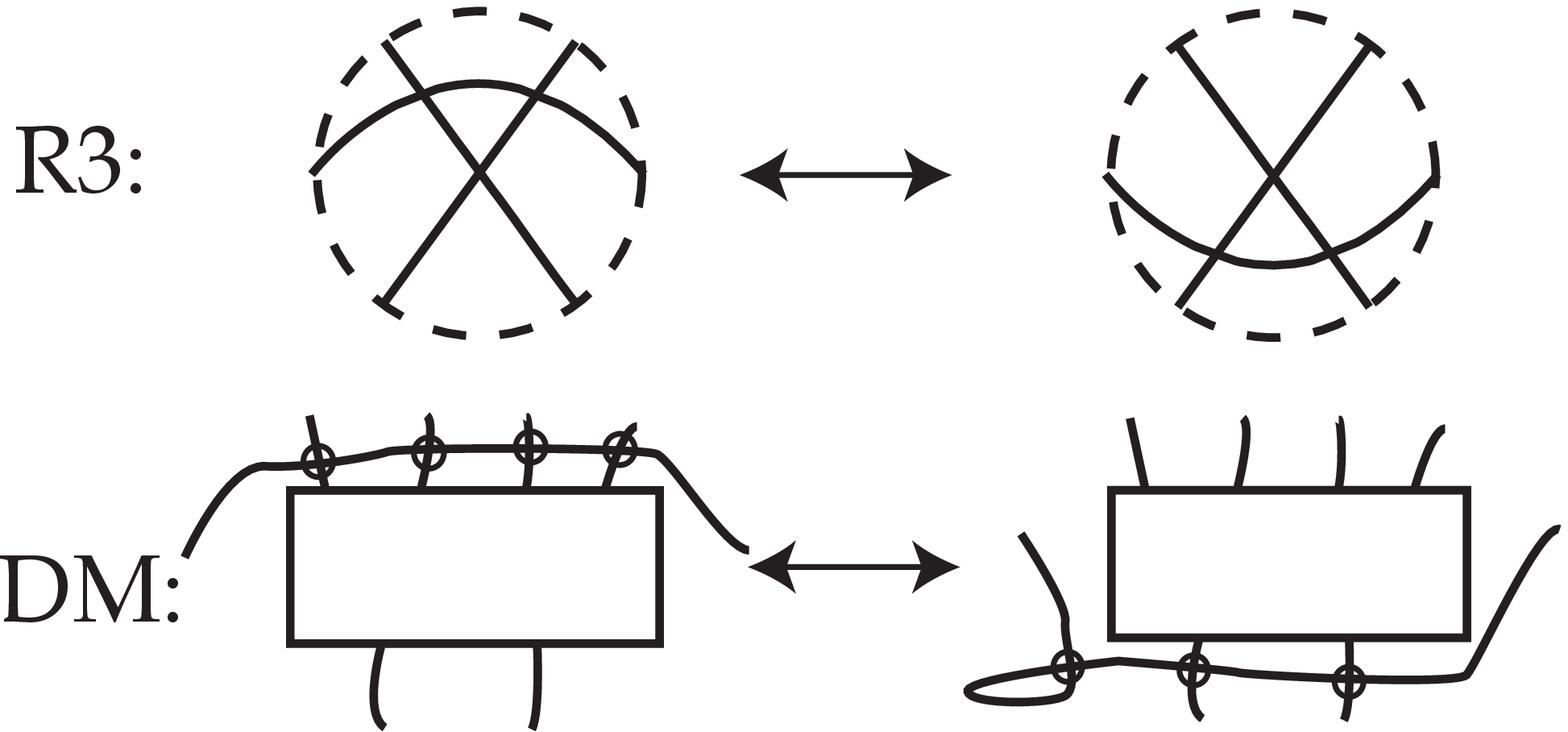}
\caption{Flat Reidemeister moves}\label{fig:flat_reidemeister_moves}
\end{figure}


Flat knots can be also identified with generic immersions of the circle into surfaces considered up to homotopies, isomorphisms and stabilizations/destabilizations.

\subsection{Based matrices}

Let $H$ be an abelian group.

\begin{definition}[\cite{T}]\label{def:based_matrix}
A {\em based matrix} with coefficients in the group $H$ is a triple $(G,s,b)$  where $G$ is a finite set, $s\in G$ and a map $b\colon G\times G\to H$ which defines a skew-symmetric matrix, i.e. $b(h,g)=-b(g,h)$ for any $g,h\in G$.
\end{definition}

Let us denote $G^\circ=G\setminus\{s\}$.

The motivating example of based matrices comes from intersection matrices for diagrams of flat knots. Let $H=\Z$ or $\Z_2$.

\begin{example}[Based matrix of a virtual (flat) knot]\label{exa:based_matrix_flat_knot}
Let $D$ be a diagram of an oriented virtual or flat knot in a surface $S$ and $\V(D)$ be the set of its classical crossings. For any crossing $c\in\V(D)$ define the left and right halves of the diagram at the crossing as shown in Fig.~\ref{fig:knot_halves}.

\begin{figure}[h]
\centering\includegraphics[width=0.5\textwidth]{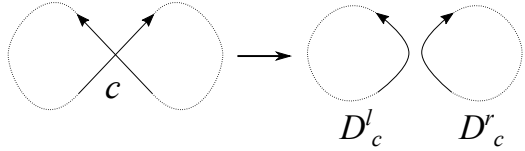}
\caption{Halves of the diagram}\label{fig:knot_halves}
\end{figure}

The halves $D^l_c$, $c\in\V(D)$, and the diagram $D$ correspond to elements of the homology group $H_1(S,H)$.

Let us define the based matrix of the diagram. Let $G(D)=\V(D)\sqcup\{s\}$ where $s$ is a formal element. The map $b_D$ is given by the formula

\begin{equation}\label{eq:based_matrix_flat_knot}
b_D(c,c')=D^l_c\cdot D^l_{c'},\quad b_D(c,s)=D^l_c\cdot D,\quad  b_D(s,s)=D\cdot D=0,
\end{equation}

where $c,c'\in\V(D)$ and the dot denotes the intersection map $H_1(S,H)\times H_1(S,H)\to H_0(S,H)=H$. In other word, $b_D(c,c')$ is the intersection number of the cycles $D^l_c$ and $D^l_{c'}$. The triple $T(D)=(G(D),s,b_D)$ is the {\em based matrix of the diagram} $D$.

For example, in case $H=\Z_2$ for the diagram $D$ in Fig.~\ref{fig:flat_knot_diagram} with three classical crossings we have $G(D)=\{s,1,2,3\}$ and
$$
b_D=\left(\begin{array}{c|ccc}
   0 & 1 & 1 & 0 \\
   \hline
   1 & 0 & 0 & 0 \\
   1 & 0 & 0 & 1 \\
   0 & 0 & 1 & 0
  \end{array}\right).
$$

\end{example}

\begin{definition}\label{def:based_matrix_elements}
Let $T=(G,s,b)$ be a based matrix. Then
\begin{itemize}
  \item element $g\in G^\circ$, is called {\em annihilating} if $b(g,h)=0$ for any $h\in G$;
  \item element $g\in G^\circ$, is called {\em core} if $b(g,h)=b(s,h)$ for any $h\in G$;
  \item elements $g_1,g_2\in G^\circ$, are called {\em complementary} if $b(g_1,h)+b(g_2,h)=b(s,h)$ for any $h\in G$.
\end{itemize}
\end{definition}

\begin{definition}\label{def:bm_homology}
Two based matrices are {\em homologous} if one can be obtained from the other by a sequence of the operations:
\begin{itemize}
\item $M_1$ which transforms $(G,s,b)$ into $(G_1=G\sqcup\{g\},s,b_1)$ such that $b_1\colon G_1\times G_1\to H$ extends $b$ and $b_1(g,h)=0$ for all $h\in G_1$;
\item $M_2$ which transforms $(G,s,b)$ into $(G_2=G\sqcup\{g\},s,b_1)$ such that $b_2\colon G_2\times G_2\to H$ extends $b$ and $b_2(g,h)=b_2(s,h)$ for all $h\in G_2$;
\item $M_3$ which transforms $(G,s,b)$ into $(G_1=G\sqcup\{g,g'\},s,b_1)$ such that $b_3\colon G_3\times G_3\to H$ is any skew-symmetric map extending $b$ with $b_3(g,h)+b_3(g',h)=b_3(s,h)$ for all $h\in G_3$
\end{itemize}
and the inverse operations $M_1^{-1},M_2^{-1},M_3^{-1}$.
\end{definition}

\begin{definition}\label{def:bm_primitive}
A based matrix is called {\em primitive} if it contains neither annihilating, nor core, nor complementary elements.
\end{definition}

\begin{lemma}[\cite{T}]\label{lem:unique_primitive_based_matrix}
Any based matrix is homologous to a unique up to isomorphism primitive based matrix.
\end{lemma}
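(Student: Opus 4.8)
The plan is to prove the two assertions separately: existence of a primitive representative in each homology class, and its uniqueness up to isomorphism.

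For existence I would argue by termination. Starting from an arbitrary based matrix $T=(G,s,b)$, if $T$ is not primitive then by Definition~\ref{def:bm_primitive} it contains an annihilating element, a core element, or a complementary pair; applying the corresponding inverse operation $M_1^{-1}$, $M_2^{-1}$ or $M_3^{-1}$ produces a homologous based matrix (a restriction of a skew-symmetric map is skew-symmetric, so the result is again a based matrix) whose underlying set is strictly smaller. Since $|G|$ is a nonnegative integer that strictly decreases at each step and is bounded below ($s$ is never removed), the process terminates, necessarily at a primitive based matrix. This proves existence.

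For uniqueness I would set up the problem as a rewriting system and appeal to Newman's lemma. Regard the operations as a reduction relation $T\to T'$ on isomorphism classes, where $T\to T'$ means $T'$ is obtained from $T$ by deleting one annihilating element, one core element, or one complementary pair (one application of $M_1^{-1},M_2^{-1}$ or $M_3^{-1}$). Then the homology relation is exactly the equivalence generated by $\to$, the primitive matrices are exactly the $\to$-normal forms, and $\to$ is terminating since $|G|$ strictly decreases. By Newman's lemma it suffices to prove \emph{local confluence}: whenever $T\to T_1$ and $T\to T_2$ by two single reductions, $T_1$ and $T_2$ admit a common reduct up to isomorphism. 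Confluence then yields unique normal forms, so two homologous primitive matrices, being their own normal forms, are isomorphic. Local confluence I would verify by a case analysis on how the deleted subsets $A,B\subseteq G^\circ$ meet, noting $s\notin A\cup B$ and that deleting one subset changes neither $b(s,\cdot)$ nor the restriction of $b$ to the remaining elements. If $A\cap B=\varnothing$ the deletions commute, since each defining condition is a family of identities indexed by $h$ and restricting the index set preserves them; the common reduct is $T\setminus(A\cup B)$. The overlapping cases are more delicate: an element that is simultaneously annihilating and core forces $b(s,\cdot)=0$ and the two reductions coincide; if a deleted element $g$ is annihilating (resp.\ core) and also lies in a deleted complementary pair $\{g,g'\}$, then $g'$ becomes core (resp.\ annihilating) after deleting $g$, so $T_1$ reduces to $T_2$; and if two complementary pairs $\{g_1,g_2\}$ and $\{g_1,g_3\}$ share $g_1$, then $b(g_2,\cdot)=b(s,\cdot)-b(g_1,\cdot)=b(g_3,\cdot)$, so $g_2$ and $g_3$ have identical rows and the relabelling $g_3\mapsto g_2$ is an isomorphism $T_1\xrightarrow{\sim}T_2$.

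I expect the heart of the argument, and its main obstacle, to be exactly this overlap analysis for complementary pairs: one must recognize that sharing an element forces the remaining two elements to carry equal rows, which is what produces the required isomorphism rather than a literal common reduct. Once each overlap pattern is shown to resolve, local confluence holds, Newman's lemma applies, and uniqueness follows; throughout, the bookkeeping must be performed on isomorphism classes, since $M_3$ introduces a complementary pair with otherwise arbitrary entries and only the isomorphism type of the reduced matrix is well defined.
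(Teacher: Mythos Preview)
The paper does not actually supply a proof of this lemma; it is stated with a citation to Turaev~\cite{T} and used as a black box. So there is no in-paper argument to compare against, and your task reduces to whether your proposal stands on its own.

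It does. The existence half is immediate, as you say, by descent on $|G|$. For uniqueness, your rewriting-system formulation is sound: the homology equivalence is exactly the symmetric--transitive closure of the reduction relation $\to$, the relation is terminating, and Newman's lemma reduces the question to local confluence. Your case analysis covers all overlaps. The only case that does not resolve by a further reduction is the shared-element case for two complementary pairs $\{g_1,g_2\}$ and $\{g_1,g_3\}$, and there your observation that $b(g_2,\cdot)=b(g_3,\cdot)$ gives an isomorphism $T_1\cong T_2$ directly; since you declared from the outset that you work on isomorphism classes, this is exactly what local confluence requires. One small point worth making explicit when you write it out: the reduction relation is well defined on isomorphism classes because an isomorphism carries annihilating, core, and complementary elements to elements of the same type.

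For context, Turaev's own proof in~\cite{T} follows the same outline---reduce by inverse moves, then show any two reductions from a common matrix agree by an overlap analysis---though he phrases it as a direct induction rather than invoking Newman's lemma. Your packaging via abstract rewriting is a clean way to organize the same verification.
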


\begin{lemma}[\cite{T}]\label{lem:funtoriality_based_matrix}
If two virtual (flat) diagrams are equivalent then the corresponding based matrices are homologous.
\end{lemma}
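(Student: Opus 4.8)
The plan is to use the standard principle that the equivalence of virtual (flat) diagrams is generated by a short list of local moves --- the (flat) Reidemeister moves of types I, II, III, the detour/virtual moves, and, in the surface model $(S,D)$, isotopy in $S$ together with (de)stabilizations --- and to verify that each such generator changes the based matrix only by one of the homology operations $M_1^{\pm1},M_2^{\pm1},M_3^{\pm1}$ of Definition~\ref{def:bm_homology}, or leaves it isomorphic. Since the homology relation on based matrices is by definition generated by $M_1,M_2,M_3$ and their inverses, establishing this for each generator proves the lemma.

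Before going move by move, I would isolate the two homological facts on which everything rests. The left and right halves at a crossing satisfy $[D^l_c]+[D^r_c]=[D]$ in $H_1(S,H)$, and the intersection pairing is a homology invariant, so every piece of a half that bounds a disk in $S$ contributes zero to all intersection numbers and to the class $[D]$. Consequently the entries $b_D(c,c')=[D^l_c]\cdot[D^l_{c'}]$, $b_D(c,s)=[D^l_c]\cdot[D]$ depend only on the homology classes of the halves. From the definition of $s$ one also gets the useful identities $b_D(s,c')=[D]\cdot[D^l_{c'}]$ and $b_D(s,s)=0$, which I will match against the annihilating, core and complementary conditions of Definition~\ref{def:based_matrix_elements}.

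With these in hand the case analysis is routine. Isotopy in $S$ and the detour/virtual moves do not touch any classical crossing and change the halves only by disk-bounding pieces, so $T(D)$ is literally unchanged; a stabilization attaches a handle disjoint from $D$, enlarging $H_1(S,H)$ but preserving all classes $[D^l_c],[D]$ and their intersection numbers, so $T(D)$ is again unchanged. In the first Reidemeister move one new crossing $c$ appears whose two halves are a contractible loop and its complement, giving $[D^l_c]=0$ (so $c$ is annihilating, operation $M_1$) or $[D^l_c]=[D]$ (so $c$ is a core element, operation $M_2$), according to the side of the kink. In the second Reidemeister move two new crossings $c_1,c_2$ appear whose left halves differ only by the contractible bigon and satisfy $[D^l_{c_1}]+[D^l_{c_2}]=[D]$; then $c_1,c_2$ form a complementary pair and the move realizes $M_3$, the freedom in $b(c_1,c_2)$ matching the ``any skew-symmetric extension'' clause of $M_3$. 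In the third Reidemeister move the three crossings persist and each half is altered only inside the contractible triangle, so every class, and hence every entry of $b_D$, is preserved and the two based matrices are isomorphic. Combining these cases yields the claim.

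The main obstacle I anticipate is the orientation and sign bookkeeping hidden in the half-class identities, rather than any conceptual difficulty. One must fix a consistent convention for left versus right halves compatible with the orientation of the unicursal component, and then verify $[D^l_c]=0$ versus $[D^l_c]=[D]$ for R1 and, most delicately, the complementary identity $[D^l_{c_1}]+[D^l_{c_2}]=[D]$ for R2 in both the parallel and antiparallel configurations. I would also take care to check each relation not only among the elements of $G^\circ$ but also in the column indexed by $s$, so that the annihilating, core and complementary conditions hold in full.
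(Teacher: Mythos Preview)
The paper does not give its own proof of this lemma: it is stated with the citation~\cite{T} and no argument is supplied, since the result is due to Turaev. Your proposal reproduces what is essentially Turaev's original argument --- checking that each Reidemeister move corresponds to one of the operations $M_1^{\pm1}$, $M_2^{\pm1}$, $M_3^{\pm1}$, while isotopies, detour moves and stabilizations leave $T(D)$ unchanged --- and the analysis you outline is correct. One small point worth tightening: in the second Reidemeister move you should explicitly verify the complementary identity in both the parallel and antiparallel strand orientations (you flag this yourself), since in one orientation the left halves of $c_1$ and $c_2$ agree up to the bigon while in the other they are complementary up to the bigon; in both cases the pair satisfies $b(c_1,h)+b(c_2,h)=b(s,h)$ for all $h$, but the verification is not identical.
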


The Lemmas~\ref{lem:unique_primitive_based_matrix} and~\ref{lem:funtoriality_based_matrix} imply that the unique primitive based matrix $T_\bullet(D)=(G_\bullet(D),s,b_\bullet)$ homologous to the based matrix $T(D)=(G(D),x,b_D)$ of a virtual (flat) knot diagram is an invariant of virtual (flat) knots.

\subsection{Parity}

Let $\mathcal K$ be a virtual or flat knot. Consider the set $\mathfrak K$ of the diagrams of the knot $K$. The set $\mathfrak K$ can be considered as the objects of a diagram category whose morphisms are compositions of isotopies and Reidemeister moves.

For any diagrams $D,D'\in\mathfrak K$ and a morphism $f\colon D\to D'$ between them, there is a correspondence $f_*\colon\V(D)\to\V(D')$ between the crossings of the diagrams. The map $f_*$ is a partial bijection between the crossing set because some crossing can disappear within the transformation $f$ and other crossing can appear by a first or second Reidemeister move.

Let $A$ be an abelian group.

\begin{definition}
A {\em parity} on the diagrams of the knot $\mathcal K$ with coefficients in the group $A$ is a family of maps $p_D\colon \V(D)\to A$ from the sets of crossings $\V(D)$ of knot diagrams $D$,  such that

  \begin{itemize}
  \item Reidemeister moves do not change the parity value of any crossing (which survives under the move),
  \item the sum of parities values of the crossing participating in a Reidemeister move is equal to zero (Fig.~\ref{fig:parity_axioms}).
  \end{itemize}

\begin{figure}[h]
\centering
\includegraphics[width=0.7\textwidth]{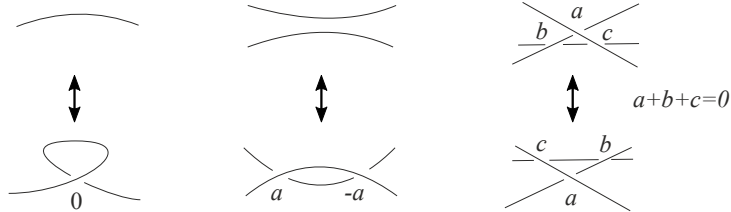}
\caption{Parity axioms}\label{fig:parity_axioms}
\end{figure}

\end{definition}

The first and the main example of parity on virtual knots is the Gaussian parity (with coefficients in $\Z_2$).

\begin{example}
The {\em Gaussian parity} of a crossing is the parity of the number of classical crossings that lie on a half of the knot corresponding to the crossing. For example, a diagram of a virtual eight-knot in Fig.~\ref{fig:gaussian_parity_example} has two odd and one even crossings.
\end{example}

\begin{figure}[h]
\centering
\includegraphics[height=3cm]{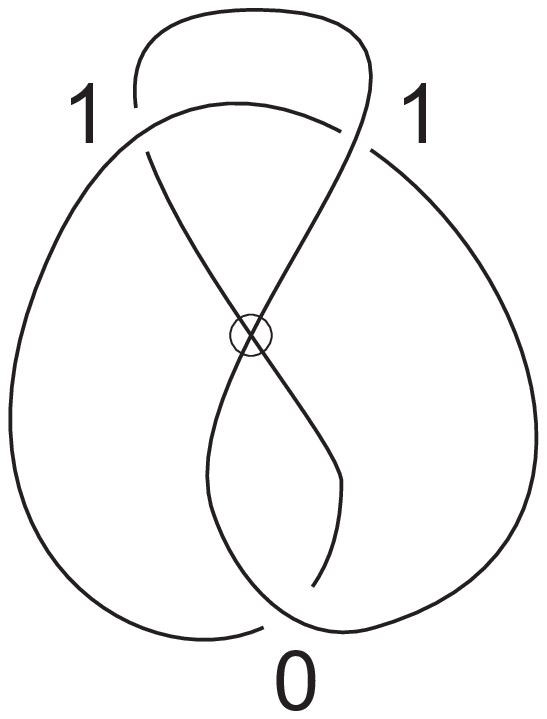}
\caption{Gaussian parity on a virtual knot diagram}\label{fig:gaussian_parity_example}
\end{figure}



Another example are homological parities on diagrams of knots in a fixed surface $S$.

 \begin{theorem}[\cite{IMN}]\label{thm:homological_parity}
  Let $p$ be a parity for diagrams of some (flat) knot $\mathcal K$ in a surface $S$ with coefficients in a group $A$. Then there exists a unique homomorphism $\phi\colon H_1(S,\Z_2)/[K]\to A$ such that for any crossing $v$ of a knot diagram $D$ its parity is equal to $p(v)=\phi[D^l_v]$. Here $D^l_v$ is the left half of the diagram at the vertex $v$ (see Fig.~\ref{fig:knot_halves}).
\end{theorem}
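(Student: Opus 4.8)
The plan is to read the asserted formula as the statement that a certain \emph{tautological labelling} is the universal parity of $\mathcal K$, through which every parity factors. Put $H=H_1(S,\Z_2)/[K]$. Since the two halves at a crossing satisfy $[D^l_v]+[D^r_v]=[D]=[K]$, the residue $q_D(v):=[D^l_v]\bmod[K]\in H$ is independent of the chosen half, and because $[K]$ is preserved by Reidemeister moves the group $H$ is an invariant of $\mathcal K$. The theorem is then equivalent to the assertion that $\phi\mapsto(v\mapsto\phi(q_D(v)))$ is a bijection from $\mathrm{Hom}(H,A)$ onto the set of $A$-valued parities of $\mathcal K$.

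The first step, which is routine, is to verify that $q$ is itself a parity with coefficients in $H$. For a crossing surviving a move, the two halves are altered only inside the disk supporting the move, hence by a null-homologous arc (possibly together with a copy of $[K]$, which dies in $H$), so $q_D(v)$ is unchanged. The crossing created by a first Reidemeister move has a null-homotopic half, so its $q$-value is $0$; the two crossings created by a second Reidemeister move have \emph{equal} halves, because the symmetric difference of the halves lies in the bigon disk and is null-homologous, and their sum vanishes since $H$ is a $\Z_2$-vector space. Thus $q$ satisfies both axioms and produces the map $\mathrm{Hom}(H,A)\to\{\text{parities}\}$; it remains to invert it.

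The heart of the proof is to show that an arbitrary parity $p$ has the form $p=\phi\circ q$. I would first record a realisation device: isotoping a piece of any diagram into a thin finger running once around a curve $\gamma$ representing a class $h\in H_1(S,\Z_2)$, then applying a second Reidemeister move to the two strands of the finger, produces within $\mathcal K$ two crossings whose common $q$-value is $h$; letting $\gamma$ vary shows the realised classes exhaust $H$. The decisive point is then that $p$ is constant on the fibres of $q$: although $p$ is a priori only a homotopy-level invariant, the relations $p(a)+p(b)=0$ between the equal-class crossings of a second Reidemeister move force $2\,p=0$ on each class, so $p$ can record only $\Z_2$-homological information, and the difference between two crossings of equal $q$-value is unwound by a finite sequence of such paired moves along which $p$ is forced to agree. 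Granting this, $\phi(h):=p(v)$ for any $v$ with $q(v)=h$ is well defined, and additivity $\phi(h_1+h_2)=\phi(h_1)+\phi(h_2)$ follows by comparing the realisation of $h_1+h_2$ through a single finger around a concatenation $\gamma_1\gamma_2$ with the separate realisations of $h_1$ and $h_2$, the intermediate moves again imposing the corresponding relation on $p$.

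Uniqueness is then immediate: the realisation device shows the classes $q(v)$ generate $H$, so two homomorphisms reproducing $p$ agree on a generating set and coincide. I expect the genuine obstacle to be the fibre-constancy claim — equivalently, that the Reidemeister moves impose no relations on a parity beyond the homological ones. Turning the informal "unwinding" into an explicit reduction, and in particular comparing the parities of homologous crossings that live in different diagrams of $\mathcal K$, is the technical core; the remaining steps are either formal or a direct inspection of the three moves.
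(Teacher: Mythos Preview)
The paper does not prove this theorem; it is quoted from the cited monograph~\cite{IMN} and used only as motivation. There is therefore no proof in the present paper against which your proposal can be compared.

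On the substance of your sketch: the outline is the standard one and is sound, but two points deserve attention. First, in verifying that the tautological labelling $q$ is a parity you treat only $R1$ and $R2$; the $R3$ axiom (that the three crossings in a third Reidemeister move sum to zero) must also be checked. It holds because, modulo $[K]$, the three left halves at an $R3$ triangle sum to $[D]$, but you should say so. Second, you correctly isolate the fibre-constancy step as the crux, and you are candid that your ``unwinding'' is informal. As written it is not yet an argument: you need an explicit mechanism that, given two crossings $v_1,v_2$ (possibly in different diagrams of $\mathcal K$) with $q(v_1)=q(v_2)$, produces a chain of Reidemeister moves along which the parity axioms force $p(v_1)=p(v_2)$. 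The usual route is to show first that $p$ depends only on the \emph{free-homotopy} class of the half (using $R2$ fingers to slide one half across another and $R3$ to pass through crossings), and then to upgrade free homotopy to $\Z_2$-homology by showing invariance under the Lickorish generators of the mapping class group --- or, more concretely, under handle slides and self-crossings of the half. Your realisation device handles the additivity and surjectivity steps cleanly once fibre-constancy is in place.
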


As a corollary of the theorem we have the statement that any parity on classical knots is trivial.

Theorem~\ref{thm:homological_parity} shows that a parity on diagrams of knots in a fixed surface is determined by the homology of the surface. Since homological information on the knot is accumulated by the based matrix of the diagram, we can express the parity in terms of based matrices and use this expression to define a parity for virtual and flat knots.

\section{Parity on based matrices}\label{sect:parity_based_matrices}

Let $A$ be an abelian group.

\begin{definition}\label{def:parity_based_matrices}
A {\em parity on based matrices with coefficients in $A$} is a family of maps $p_T\colon G\to A$ defined for any based matrix $T=(G,s,b)$ such that
  \begin{itemize}
  \item[(P0)] if a based matrix $T'$ is obtained from a based matrix $T$ by adding an annihilating or core element of a pair of complementary elements then for any $g\in G$ $p_T(g) = p_{T'}(g)$;
  \item[(P1)] if $g\in G^\circ$ is an annihilating or core elements then $p_T(g)=0$,
  \item[(P2)] if $g_1,g_2\in G^\circ$ are complementary then $p_T(g_1)+p_T(g_2)=0$,
  \item[(P3)] if $g_1,g_2,g_3\in G^\circ$ such that $b(g_1,h)+b(g_2,h)+b(g_3,h)=b(s,h)$ for any $h\in G$ then $p_T(g_1)+p_T(g_2)+p_T(g_3)=0$.
  \end{itemize}
  By definition we set $p_T(s)=0$.
\end{definition}

\begin{example}
{\em Gaussian parity} with coefficients in $A=\Z$ or $\Z_2$ on based matrices with coefficients in $A$ is given by the formula $p_T(g)=b(g,s)\in A$. The parity properties follows from the fact that the moves  $M_i$, $i=1,2,3$, extend the form $b$ and in particular keep the value $b(g,s)$, $g\in G^\circ$, and from the equality $b(s,s)=0$.
\end{example}

Below we use a more general notion than parity --- parity functor~\cite{N}. Parity functor uses different coefficient groups for different diagrams, and the coefficient groups are linked by (partial) isomorphism which allow to identify parity values of a crossing under transformations (isotopies and Reidemeister moves). Here is the formal definition.

\begin{definition}\label{def:parity_functor_based_matrices}
A {\em parity functor on based matrices} is a family of maps $P_T\colon G\to A(T)$ into abelian groups $A(T)$ defined for any based matrix $T=(G,s,b)$ such that

  \begin{itemize}
  \item[(P0)] if a based matrix $T'=(G',s,b')$ is obtained from a based matrix $T=(G,s,b)$ by a move $f$ of type $M_1$, $M_2$ or $M_3$ then there is a fixed monomorphism $A(f)\colon A(T)\to A(T')$ such that $P_{T'}(g)=A(f)(P_T(g))$ for any $g\in G$;
  \item[(P1)] if $g\in G^\circ$ is an annihilating or core element, or $g=s$ then $p_T(g)=0$,
  \item[(P2)] if $g_1,g_2\in G^\circ$ are complementary then $P_T(g_1)+P_T(g_2)=0$,
  \item[(P3)] if $g_1,g_2,g_3\in G^\circ$ such that $b(g_1,h)+b(g_2,h)+b(g_3,h)=b(s,h)$ for any $h\in G$ then $P_T(g_1)+P_T(g_2)+P_T(g_3)=0$.
  \end{itemize}
\end{definition}

Any parity $p$ with coefficients in a group $A$ is a parity functor with the coefficient groups $A(T)=A$, the maps $P_T=p_T$ and the monomorphisms $A(f)=id_A$.


%
%
%
%

\begin{definition}
Let $T=(G,s,b)$ be a based matrix and $\mathfrak C$ be some partition of $G$ into disjoint subsets such that $\{s\}\in \mathfrak C$. The subgroup
\begin{equation}\label{eq:annulator}
Ann({\mathfrak C})= \{v\in\Z[G^\circ]\,|\, \exists k\in\Z :\, \forall C\in {\mathfrak C}\ b(v,\chi_C)=k\cdot b(s,\chi_C)\}
\end{equation}

in $\Z[G^\circ]$ where $\chi_C=\sum_{g\in C}g$, is called the {\em annulator} of the partition $\mathfrak C$.

The {\em derived partition} $A\mathfrak C$ of $\mathfrak C$ is the partition which consists of $\{s\}$ and the equivalence classes of the following relation on $G^\circ$:

\begin{equation}\label{eq:derived_partition}
g_1\simeq g_2 \Longleftrightarrow  g_1-g_2\mbox{ or }g_1+g_2\in Ann({\mathfrak C}).
\end{equation}
\end{definition}

We say that a partition $\mathfrak C$ of $G$ is {\em finer} than a partition $\mathfrak C'$  (${\mathfrak C}\succ {\mathfrak C}'$) if all classes of $\mathfrak C'$ are unions of some classes of $\mathfrak C$.

\begin{lemma}\label{lem:derivation_finer_partition}
If ${\mathfrak C}\succ {\mathfrak C}'$ then $A{\mathfrak C}\succ A{\mathfrak C}'$.
\end{lemma}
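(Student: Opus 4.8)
The plan is to reduce the statement about derived partitions to a single inclusion of annulators, namely $Ann(\mathfrak C)\subseteq Ann(\mathfrak C')$, and then to observe that this inclusion forces the equivalence relation defining $A\mathfrak C$ to refine the one defining $A\mathfrak C'$. Since $A\mathfrak C\succ A\mathfrak C'$ means precisely that every class of $A\mathfrak C'$ is a union of classes of $A\mathfrak C$, i.e. that $g_1\simeq_{\mathfrak C}g_2$ implies $g_1\simeq_{\mathfrak C'}g_2$, the whole lemma comes down to comparing the two annulators as subgroups of $\Z[G^\circ]$.

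First I would prove $Ann(\mathfrak C)\subseteq Ann(\mathfrak C')$. The hypothesis $\mathfrak C\succ \mathfrak C'$ says each class $C'\in\mathfrak C'$ is a union of classes of $\mathfrak C$; in particular the singleton $\{s\}$ is a class of both, and for every $C'\in\mathfrak C'$ we have $\chi_{C'}=\sum_{C\subseteq C'}\chi_C$, the sum running over the $\mathfrak C$-classes $C$ contained in $C'$. Now take $v\in Ann(\mathfrak C)$ with witnessing integer $k$, so that $b(v,\chi_C)=k\,b(s,\chi_C)$ for all $C\in\mathfrak C$. Using bilinearity of $b$ in its second argument and summing over the $\mathfrak C$-classes inside a fixed $C'$ gives $b(v,\chi_{C'})=\sum_{C\subseteq C'}b(v,\chi_C)=k\sum_{C\subseteq C'}b(s,\chi_C)=k\,b(s,\chi_{C'})$. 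Hence the \emph{same} constant $k$ witnesses $v\in Ann(\mathfrak C')$, which proves the inclusion.

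With the inclusion in hand the conclusion is immediate. Suppose $g_1\simeq_{\mathfrak C}g_2$, i.e. $g_1-g_2\in Ann(\mathfrak C)$ or $g_1+g_2\in Ann(\mathfrak C)$. By the inclusion the same element lies in $Ann(\mathfrak C')$, so $g_1\simeq_{\mathfrak C'}g_2$. Thus the relation defining $A\mathfrak C$ refines the one defining $A\mathfrak C'$, every $\simeq_{\mathfrak C'}$-class is a union of $\simeq_{\mathfrak C}$-classes, and adjoining the common class $\{s\}$ yields $A\mathfrak C\succ A\mathfrak C'$. (That $\simeq_{\mathfrak C}$ is a genuine equivalence relation, so that its classes make sense, uses only that $Ann(\mathfrak C)$ is a subgroup: transitivity follows by adding or subtracting the relevant sums and differences, e.g. $g_1-g_3=(g_1-g_2)+(g_2-g_3)$.)

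I expect the only real subtlety to be bookkeeping rather than a genuine obstacle: one must fix the direction of ``finer'' correctly, noting that a finer partition imposes \emph{more} proportionality constraints and therefore has a \emph{smaller} annulator, and one must check that a single constant $k$ survives the passage from $\mathfrak C$ to $\mathfrak C'$. Both points are settled by the observation that the proportionality $b(\,\cdot\,,\chi_C)=k\,b(s,\chi_C)$ is linear in the class variable and hence is preserved when the classes $C$ are amalgamated into the larger classes $C'$ of $\mathfrak C'$.
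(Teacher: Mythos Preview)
Your argument is correct and matches the paper's proof essentially step for step: first establish $Ann(\mathfrak C)\subseteq Ann(\mathfrak C')$ by writing each $\chi_{C'}$ as a sum of $\chi_C$'s and using linearity with the same constant $k$, then deduce that the equivalence relation defining $A\mathfrak C$ refines that of $A\mathfrak C'$. The only cosmetic difference is that the paper phrases the second step as ``if $C\in A\mathfrak C$ and $C'\in A\mathfrak C'$ meet then $C\subset C'$'', which is exactly your implication $g_1\simeq_{\mathfrak C}g_2\Rightarrow g_1\simeq_{\mathfrak C'}g_2$ rewritten at the level of classes.
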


\begin{proof}
Let ${\mathfrak C}\succ {\mathfrak C}'$. Then $Ann(\mathfrak C)\subset Ann(\mathfrak C')$. Indeed, if $v\in Ann(\mathfrak C)$ then $b(v,\chi_C)=k\cdot b(s,\chi_C)$ for any $C\in \mathfrak C$. For any $C'\in \mathfrak C'$ we have $C'=\sqcup_i C_i, C_i\in \mathfrak C$. Then $\chi_{C'}=\sum_i\chi_C$ and
$$b(v,\chi_{C'})=\sum_i b(v,\chi_{C_i})=\sum_i k\cdot b(s,\chi_{C_i})=k\cdot b(s,\chi_{C'}).$$

For the relation $A{\mathfrak C}\succ A{\mathfrak C}'$ it suffices to show that for any $C\in A{\mathfrak C}$ and $C'\in A{\mathfrak C'}$ such that $C\cap C'\ne\emptyset$ one has inclusion $C\subset C'$. Let $x\in C\cap C'$ and $y\in C$. Then $x\pm y\in Ann(\mathfrak C)\subset Ann(\mathfrak C')$. Hence, $x$ and $y$ belong to the same class in $A{\mathfrak C}'$, i.e. $x,y\in C'$. Thus, $C\subset C'$.
\end{proof}

Let ${\mathfrak C}_0 = \bigcup_{g\in G}\{\{g\}\}$ and ${\mathfrak C}_n = A^n{\mathfrak C}_0$. Then ${\mathfrak C}_0\succ{\mathfrak C}_1\succ{\mathfrak C}_2\succ\dots$. Let ${\mathfrak C}_\infty = \lim_{n\to\infty} {\mathfrak C}_n$. The partition ${\mathfrak C}_\infty$ is called the {\em stable partition}. Let $U = Ann({\mathfrak C}_\infty)$ and $A^{st} = \Z[G^\circ]/U$. There is a natural map $P^{st}\colon G^\circ\to A^{st}$ which maps an element $g$ to the coset $gU$.

Repeating this construction for all based matrices $T=(G,s,b)$, we get a family of stable partitions ${\mathfrak C}_\infty(T)$, groups  $A^{st}(T) = \Z[G^\circ]/Ann({\mathfrak C}_\infty(T))$ and maps $P^{st}_T\colon G^\circ\to A^{st}(T)$. We claim that these maps form a parity functor on based matrices. We will check the claim in a more general situation.

\begin{definition}
A family $\mathfrak C$ of partitions $\mathfrak C(T)$ of the sets $G$ where $T=(G,s,b)$ are based matrices, is called a {\em tribal system} if
\begin{itemize}
\item for any $T=(G,s,b)$ $\{s\}\in \mathfrak C(T)$;
\item if a based matrix $T'$ is obtained from a based matrix $T=(G,s,b)$ by adding an annihilating or core element of a pair of complementary elements then $\mathfrak C(T)=\mathfrak C(T')\cap G$, i.e. for any $C\in\mathfrak C(T)$ there exists a unique $C'\in\mathfrak C(T')$ such that $C=C'\cap G$;
\item if $g_1,g_2\in G^\circ$ in a based matrix $T=(G,s,b)$ are complementary then there exists $C\in\mathfrak C(T)$ such that $g_1,g_2\in C$.
\end{itemize}

The classes $C\in\mathfrak C(T)$ are called {\em tribes}.
\end{definition}

\begin{remark}
We can reformulate the second and the third properties of tribal system as follows:
\begin{itemize}
  \item if two elements belong to one tribe and survive after a transformation of the base matrix then after the transformation they remain in one tribe;
  \item complementary vectors belong to one tribe.
\end{itemize}
\end{remark}

\begin{theorem}\label{thm:tribe_system_parity_functor}
  Let $\mathfrak C$ be a tribal system. For any based matrix $T=(G,s,b)$, let $A^{\mathfrak C}(T) = \Z[G^\circ]/Ann({\mathfrak C}(T))$ and $P^{\mathfrak C}_T\colon G^\circ\to A^{\mathfrak C}(T)$ be the natural maps. Then the maps $P^{\mathfrak C}_T$ define a parity functor on based matrices with coefficients $A^{\mathfrak C}(T)$.
\end{theorem}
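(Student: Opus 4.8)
The plan is to verify, for the maps $P^{\mathfrak C}_T\colon G^\circ\to A^{\mathfrak C}(T)$ (extended by the convention $P^{\mathfrak C}_T(s)=0$), the four axioms of Definition~\ref{def:parity_functor_based_matrices}. Axioms (P1)--(P3) fall out immediately from the shape of the annulator, while the functoriality axiom (P0) carries the real weight. For (P1)--(P3) I would simply exhibit the relevant vectors inside $Ann(\mathfrak C(T))$, so that their $P^{\mathfrak C}_T$-images vanish: if $g\in G^\circ$ is annihilating then $b(g,\chi_C)=0=0\cdot b(s,\chi_C)$ for every tribe $C$ (take $k=0$), and if $g$ is core then $b(g,\chi_C)=b(s,\chi_C)$ (take $k=1$), giving (P1); for (P2) and (P3), summing the defining relation over the elements of each tribe $C$ shows that $g_1+g_2$ (resp. $g_1+g_2+g_3$) pairs with every $\chi_C$ as $b(s,\chi_C)$, so it lies in $Ann(\mathfrak C(T))$ with $k=1$ and maps to $0$.

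The heart of the argument is (P0). For a move $f$ of type $M_1$, $M_2$ or $M_3$ producing $T'=(G',s,b')$ from $T=(G,s,b)$, the natural candidate for $A(f)$ is the homomorphism induced on quotients by the inclusion $\iota\colon\Z[G^\circ]\hookrightarrow\Z[(G')^\circ]$. This descends to a well-defined map $A(f)\colon A^{\mathfrak C}(T)\to A^{\mathfrak C}(T')$ with $P^{\mathfrak C}_{T'}(g)=A(f)(P^{\mathfrak C}_T(g))$ precisely when $\iota(Ann(\mathfrak C(T)))\subseteq Ann(\mathfrak C(T'))$, and it is a monomorphism precisely when
\[
Ann(\mathfrak C(T'))\cap\Z[G^\circ]=Ann(\mathfrak C(T)).
\]
Thus the whole theorem reduces to this single identity of annulators, which I would establish by comparing, for $v\in\Z[G^\circ]$, the $T$-constraints $b(v,\chi_C)=k\,b(s,\chi_C)$ against the $T'$-constraints $b'(v,\chi_{C'})=k'\,b'(s,\chi_{C'})$.

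To carry out the comparison I would use that the tribes of $T'$ restrict to those of $T$, so each $\chi_{C'}$ equals $\chi_{C}+\chi_N$ with $C=C'\cap G$ and $\chi_N$ the contribution of the newly added element(s). The decisive observation is that $\{s\}$ is always a tribe, so the constraint for $C=\{s\}$ forces every $v\in Ann$ to satisfy $b(v,s)=0$. Combined with the defining relations of the added elements, this makes the new contributions vanish: for $M_1$ the new element is annihilating, so $b'(v,\chi_N)=b'(s,\chi_N)=0$ outright; for $M_2$ one computes $b'(v,g)=-b(s,v)=0$ and $b'(s,g)=0$. Hence the $T'$-constraints collapse to the $T$-constraints with the same scalar $k$, yielding the annulator identity in both directions.

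The main obstacle is the $M_3$ case, where the two new complementary elements $g,g'$ satisfy only the \emph{summed} relation $b'(v,g)+b'(v,g')=-b(s,v)$, their individual pairings being uncontrolled; if $g$ and $g'$ landed in distinct tribes the argument would break. This is exactly where the third tribal-system axiom is used: since $g,g'$ are complementary in $T'$, they lie in one tribe $C'_0$, so $g+g'$ enters a single $\chi_{C'_0}$ and only the controlled sum $b'(v,g+g')=-b(s,v)=0$ ever appears, again collapsing the $T'$-constraints to the $T$-constraints. Once the annulator identity holds, well-definedness and injectivity of $A(f)$ together with the compatibility $P^{\mathfrak C}_{T'}\circ\iota=A(f)\circ P^{\mathfrak C}_T$ are purely formal, which completes (P0) and hence the theorem.
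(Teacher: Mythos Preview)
Your proof is correct and matches the paper's approach: the paper likewise reduces (P0) to the annulator identity $Ann(\mathfrak C(T'))\cap\Z[G^\circ]=Ann(\mathfrak C(T))$ and proves it (packaged as an auxiliary lemma) via exactly the two observations you isolate---that the tribe $\{s\}$ forces $b(v,s)=0$, and that for $M_3$ the tribal-system axiom puts both new complementary elements in a single tribe so only their controlled sum $b'(v,g+g')$ enters. The only cosmetic difference is that the paper states its lemma for an arbitrary partition $\mathfrak C'$ of $G'$ with $\{s\}\in\mathfrak C'$ and $G'\setminus G$ contained in one class, a generality it reuses later for the stable partition.
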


The parity functor $P^{\mathfrak C}_T$ is called the {\em parity functor associated with the tribal system $\mathfrak C$}.

First we prove an auxiliary lemma.

\begin{lemma}\label{lem:annulator_P0}
Let $T=(G,s,b)$ be a based matrix and $T'=(G',s,b')$ be the result of an operation $M_1$, $M_2$ or $M_3$ applied to $T$. Let $\mathfrak C'$ be a partition of $G'$ such that $\{s\}\in\mathfrak C'$ and there exists $C'\in\mathfrak C'$ such that $G'\setminus G\subset C'$. Then
\begin{enumerate}
  \item $Ann(\mathfrak C'\cap G)=Ann(\mathfrak C')\cap \Z[G^\circ]$
  \item $A(\mathfrak C'\cap G)=A(\mathfrak C')\cap G$
\end{enumerate}
\end{lemma}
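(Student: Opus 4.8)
The plan is to prove the first assertion by a direct comparison of the two annulators' defining conditions, and then to deduce the second assertion formally from the first.

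First I would fix notation. Since each of $M_1,M_2,M_3$ adjoins one or two new elements and, by hypothesis, all of $G'\setminus G$ sits inside one class, let $C'_0\in\mathfrak C'$ be the unique tribe with $G'\setminus G\subset C'_0$. As the classes of $\mathfrak C'$ are disjoint, every other class is contained in $G$ and is therefore unchanged in $\mathfrak C:=\mathfrak C'\cap G$, while $C'_0\cap G$ is the only class that is genuinely cut down (possibly to $\emptyset$). Writing $\chi_{C'}=\chi_{C'\cap G}+\sum_{g\in(G'\setminus G)\cap C'}g$, I would reduce everything to understanding the contributions $b'(\cdot,g)$ of the new elements $g$.

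The key computations come from evaluating the defining relations of the moves. Plugging $h=s$ into the relation defining $M_1$, $M_2$, or $M_3$ and using $b'(s,s)=0$ gives $\sum_{g\in G'\setminus G}b'(s,g)=0$; hence $b'(s,\chi_{C'})=b(s,\chi_{C'\cap G})$ for every $C'\in\mathfrak C'$, so the right-hand sides of the annulator conditions match exactly. For $v\in\Z[G^\circ]$ the new elements instead contribute a discrepancy $\delta(v):=\sum_{g\in G'\setminus G}b'(v,g)$, which a short calculation using skew-symmetry and the $M_i$-relations evaluates to $0$ for $M_1$ and to $b(v,s)$ for $M_2$ and $M_3$. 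The crux --- and the only place where the move types threaten to behave differently --- is this discrepancy $\delta(v)$ arising from $M_2,M_3$. The resolution is that it is killed automatically: the singleton $\{s\}$ is a class of both partitions, and the annulator condition on it forces $b(v,s)=k\cdot b(s,s)=0$ for any $v$ in either annulator. Thus $\delta(v)=0$ throughout, and with the common scalar $k$ the two systems of equations $b'(v,\chi_{C'})=k\,b'(s,\chi_{C'})$ (over $\mathfrak C'$) and $b(v,\chi_C)=k\,b(s,\chi_C)$ (over $\mathfrak C$) become literally identical on $\Z[G^\circ]$; checking the degenerate case $C'_0\cap G=\emptyset$ separately (both sides vanish) completes both inclusions and proves $Ann(\mathfrak C'\cap G)=Ann(\mathfrak C')\cap\Z[G^\circ]$.

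Finally, the second assertion follows formally. The derived partition is built from the relation $g_1\simeq g_2\iff g_1\pm g_2\in Ann$, so it only tests membership of sums and differences in the annulator. For $g_1,g_2\in G^\circ$ one has $g_1\pm g_2\in\Z[G^\circ]$, whence the first assertion gives $g_1\pm g_2\in Ann(\mathfrak C'\cap G)\iff g_1\pm g_2\in Ann(\mathfrak C')$. Therefore $\simeq$ defines the same equivalence relation on $G^\circ$ for $\mathfrak C'\cap G$ as does the restriction to $G^\circ$ of the relation for $\mathfrak C'$; adjoining the common class $\{s\}$ yields $A(\mathfrak C'\cap G)=A(\mathfrak C')\cap G$. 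I expect the bookkeeping around $C'_0$, in particular the empty-intersection case, to be the only fiddly point, the conceptual content lying entirely in the vanishing of $\delta(v)$ forced by the base element $s$.
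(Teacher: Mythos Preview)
Your proposal is correct and follows essentially the same route as the paper: both arguments isolate the single tribe $C'$ containing $G'\setminus G$, compute that $b'(v,\chi_{C'})=b(v,\chi_{C'\cap G})+l\cdot b(v,s)$ with $l\in\{0,1\}$ depending on the move type, and then use the annulator condition on the class $\{s\}$ (which forces $b(v,s)=0$) to kill the discrepancy, after which the two systems of defining equations coincide on $\Z[G^\circ]$; part~2 is then deduced formally from part~1 in both proofs. Your treatment is slightly more careful in that you explicitly flag the degenerate case $C'\cap G=\emptyset$, which the paper leaves implicit.
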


\begin{proof}
  1)\ Since $G'\setminus G\subset C'$, we have $\mathfrak C'\cap G=\mathfrak C'\setminus\{C'\}\cup\{C\}$ where $C=C'\cap G$. By definition we have $b'(v,\chi_{C'})=b'(v,\chi_{C})+l\cdot b'(v,s)$ for any $v\in\Z[G'^\circ]$ where $l=0$ when the element in $G'\setminus G$ is annihilating, and $l=1$ when the new element is core or when $G'\setminus G$ consists of a pair of complementary elements. Hence, $b'(s,\chi_{C'})=b'(s,\chi_{C})$.  Then the pair of conditions for the classes $C'$ and $\{s\}$ in the definition of $Ann(\mathfrak C')$ is eqivalent to the pair of conditions for the classes $C$ and $\{s\}$:
  $$
  \left\{\begin{array}{c}
           b'(v,\chi_{C'})=k\cdot b'(s,\chi_{C'}) \\
           b'(v,s)=k\cdot b'(s,s)=0
         \end{array}\right. \Longleftrightarrow
         \left\{\begin{array}{c}
           b'(v,\chi_{C})=k\cdot b'(s,\chi_C) \\
           b'(v,s)=0
         \end{array}\right.
  $$
  This means that we can replace the tribe $C'$ with $C$ in the definition of $Ann({\mathfrak C}(T'))$, i.e. replace the partition $\mathfrak C'$ with the partition $\mathfrak C'\setminus\{C'\}\cup\{C\}=\mathfrak C'\cap G$. Thus,
    \begin{multline*}
    Ann({\mathfrak C'})\cap\Z[G^\circ]=\\ \{v\in\Z[G^\circ]\,|\, \exists k\in\Z :\, \forall C''\in {\mathfrak C'}\ b'(v,\chi_{C''})=k\cdot b'(s,\chi_{C''})\}=\\
    \{v\in\Z[G^\circ]\,|\, \exists k\in\Z :\, \forall C''\in {\mathfrak C'\cap G}\ \ b(v,\chi_{C''})=k\cdot b(s,\chi_{C''})\}=Ann({\mathfrak C'}\cap G).
  \end{multline*}

  2)\ Let $\sim_{A(\mathfrak C'\cap G)}$ and $\sim_{A(\mathfrak C')}$ be the equivalence relations on $G^\circ$ determined by the correspondent partitions. Then for any $g_1,g_2\in G^\circ$ we have a chain of equivalences
  $$
  g_1\sim_{A(\mathfrak C'\cap G)} g_2 \Leftrightarrow g_1\pm g_2\in Ann({\mathfrak C'}\cap G) \Leftrightarrow g_1\pm g_2\in Ann({\mathfrak C'}) \Leftrightarrow g_1\sim_{A(\mathfrak C')} g_2.
  $$
  Thus, the partitions $A(\mathfrak C'\cap G)$ and $A(\mathfrak C')\cap G$ coincide.
\end{proof}

\begin{proof}[Proof of Theorem~\ref{thm:tribe_system_parity_functor}]
  The properties (P1),(P2),(P3) follow from the definition of the annulator $Ann({\mathfrak C}(T))$.

  Let us check the property (P0). Let a based matrix $T'=(G',s,b')$ is obtained from a based matrix $T=(G,s,b)$ by adding an annihilating or core element of a pair of complementary elements. We need to show that the natural map $A^{\mathfrak C}(T)\to A^{\mathfrak C}(T')$ which maps $g\in G$ to $g$, is a well defined monomorphism. This condition is equivalent to the equality $Ann({\mathfrak C}(T'))\cap\Z[G^\circ]=Ann({\mathfrak C}(T))$. The last statement holds by Lemma~\ref{lem:annulator_P0}.

\end{proof}

\begin{proposition}\label{prop:stable_tribal_system}
The stable partition $\mathfrak C_\infty$ is a tribal system.
\end{proposition}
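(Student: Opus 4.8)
I would verify the three defining axioms of a tribal system for the family $\mathfrak C_\infty$ directly from its construction as the stabilization of the derivation operator $A$. The first and third axioms are quick. Since $\{s\}$ is a class of $\mathfrak C_0$ and every derived partition $A\mathfrak C$ contains $\{s\}$ by definition, $\{s\}\in\mathfrak C_\infty(T)$ for every $T$, which is the first axiom. For the third, if $g_1,g_2\in G^\circ$ are complementary then $b(g_1+g_2,\chi_C)=b(s,\chi_C)$ for every subset $C$, so $g_1+g_2\in Ann(\mathfrak C)$ (with $k=1$) for an arbitrary partition $\mathfrak C$; hence $g_1\simeq g_2$ already in $\mathfrak C_1=A\mathfrak C_0$, and since the chain $\mathfrak C_0\succ\mathfrak C_1\succ\cdots$ only coarsens, $g_1,g_2$ lie in a single class of $\mathfrak C_\infty$.

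The real content is the second axiom: if $T'=(G',s,b')$ is obtained from $T=(G,s,b)$ by a move $M_1$, $M_2$ or $M_3$ (adding an annihilating element, a core element, or a complementary pair), then $\mathfrak C_\infty(T)=\mathfrak C_\infty(T')\cap G$. The natural strategy is induction on $n$ using Lemma~\ref{lem:annulator_P0}, whose conclusion (2) reads $A(\mathfrak C'\cap G)=A(\mathfrak C')\cap G$ precisely when all new elements $G'\setminus G$ lie in one class of $\mathfrak C'$. For an annihilating or core element ($|G'\setminus G|=1$) this hypothesis holds at every level, since a singleton stays inside a single class under coarsening; starting from $\mathfrak C_0(T')\cap G=\mathfrak C_0(T)$ the induction yields $\mathfrak C_n(T')\cap G=\mathfrak C_n(T)$ for all $n$, and hence equality in the limit.

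The main obstacle is the complementary case, where $G'\setminus G=\{g,g'\}$ occupies two distinct singleton classes of $\mathfrak C_0(T')$, so Lemma~\ref{lem:annulator_P0} cannot be applied at level $0$. Because the $M_3$-extension values $b'(g,h)$ are essentially unconstrained, the restricted partition $\mathfrak P:=\mathfrak C_1(T')\cap G$ can be \emph{strictly finer} than $\mathfrak C_1(T)$, so the naive ``equal at every level'' induction breaks down. I would resolve this by a squeezing argument. From level $1$ onward $g,g'$ share a class (by the reasoning of the third axiom), so Lemma~\ref{lem:annulator_P0}(2) applies and gives $\mathfrak C_{n+1}(T')\cap G=A(\mathfrak C_n(T')\cap G)$ for $n\ge 1$; thus the restricted sequence is exactly the intrinsic $A$-derivation of $G$ started at $\mathfrak P$, i.e. $A^m\mathfrak P=\mathfrak C_{m+1}(T')\cap G$. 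On the other hand $\mathfrak P$ is trapped between the finest partition $\mathfrak C_0(T)$ and its derivative $\mathfrak C_1(T)=A\mathfrak C_0(T)$, that is $\mathfrak C_0(T)\succeq\mathfrak P\succeq\mathfrak C_1(T)$; the left inequality is trivial and the right one follows from $Ann(\mathfrak C_0(T'))\cap\Z[G^\circ]\subseteq Ann(\mathfrak C_0(T))$.

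Applying $A^m$ and the monotonicity of Lemma~\ref{lem:derivation_finer_partition} gives $\mathfrak C_m(T)\succeq A^m\mathfrak P\succeq\mathfrak C_{m+1}(T)$; since both outer terms stabilize to $\mathfrak C_\infty(T)$, the squeeze forces $A^m\mathfrak P=\mathfrak C_\infty(T)$ for large $m$, whence $\mathfrak C_\infty(T')\cap G=\mathfrak C_\infty(T)$. The one point to check carefully feeding into all of this is that, once $g,g'$ are merged, their contributions to the annulator collapse onto those of $s$ through the identity $b'(v,g)+b'(v,g')=b'(v,s)$ for $v\in\Z[G^\circ]$, which is exactly the algebra already performed in the proof of Lemma~\ref{lem:annulator_P0}; granting that, the squeeze argument closes the complementary case and completes the verification of the second axiom.
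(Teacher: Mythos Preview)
Your proof is correct and uses essentially the same approach as the paper: the first and third tribal axioms are dispatched directly, and the compatibility axiom is obtained by a squeeze argument built on Lemmas~\ref{lem:derivation_finer_partition} and~\ref{lem:annulator_P0}. The only cosmetic difference is that the paper treats all three moves uniformly by introducing an auxiliary initial partition $\tilde{\mathfrak C}_0(T')$ (the discrete partition of $G'$ with the set $G'\setminus G$ merged into one class), squeezes $\tilde{\mathfrak C}_n(T')=A^n\tilde{\mathfrak C}_0(T')$ between $\mathfrak C_n(T')$ and $\mathfrak C_{n+1}(T')$ on the $T'$ side, and then applies Lemma~\ref{lem:annulator_P0} inductively to get $\tilde{\mathfrak C}_n(T')\cap G=\mathfrak C_n(T)$; you instead handle $M_1,M_2$ by direct induction and for $M_3$ squeeze $A^m\mathfrak P$ between $\mathfrak C_m(T)$ and $\mathfrak C_{m+1}(T)$ on the $T$ side---the underlying mechanism is identical.
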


\begin{proof}
Let $T=(G,s,b)$ be a based matrix. By the definition of annulator, for any partition $\mathfrak C(T)$ of $G$ any pair of complementary elements belongs to one tribe of the partition $A\mathfrak C(T)$.

Let $T'=(G',s,b')$ is obtained from $T$ by an operation $M_1$, $M_2$ or $M_3$. Let us show that $\mathfrak C_\infty(T')\cap G=\mathfrak C_\infty(T)$.

Let $\tilde{\mathfrak C}_0(T')$ be the partition of $G'$ which consists of $\{s\}$, one-element sets $\{g\}$, $g\in G^\circ$, and the subset $G'\setminus G$. Then $\mathfrak C_0(T')\succ \tilde{\mathfrak C}_0(T') \succ A\mathfrak C_0(T')=\mathfrak C_1(T')$.

Let $\tilde{\mathfrak C}_n(T')=A^n\tilde{\mathfrak C}_0(T')$ and $\tilde{\mathfrak C}_\infty(T')=\lim_{n\to\infty}\tilde{\mathfrak C}_n(T')$. By Lemma~\ref{lem:derivation_finer_partition} we have $\mathfrak C_n(T')\succ \tilde{\mathfrak C}_n(T') \succ \mathfrak C_{n+1}(T')$. Hence, $\tilde{\mathfrak C}_\infty(T')=\mathfrak C_\infty(T')$.

By definition $\tilde{\mathfrak C}_0(T')\cap G=\mathfrak C_0(T)$. Lemma~\ref{lem:annulator_P0} implies that
$$\tilde{\mathfrak C}_n(T')\cap G=A^n\tilde{\mathfrak C}_0(T')\cap G=A^n(\tilde{\mathfrak C}_0(T')\cap G)=A^n\mathfrak C_0(T)=\mathfrak C_n(T).$$

Then $\mathfrak C_\infty(T)=\tilde{\mathfrak C}_\infty(T')\cap G=\mathfrak C_\infty(T')\cap G$. Thus, $\mathfrak C_\infty$ is a tribal system.
\end{proof}

Theorem~\ref{thm:tribe_system_parity_functor} and Proposition~\ref{prop:stable_tribal_system} imply the following corollary

\begin{corollary}\label{cor:stable_parity_functor}
The family of maps $P^{st}_T$ is a parity functor (called the {\em stable parity functor}) with coefficients in the groups $A^{st}(T)$.
\end{corollary}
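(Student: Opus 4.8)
The plan is to recognize Corollary~\ref{cor:stable_parity_functor} as nothing more than the specialization of Theorem~\ref{thm:tribe_system_parity_functor} to one particular choice of tribal system, namely the family of stable partitions $\mathfrak C_\infty$. First I would record the purely notational fact that the stable construction and the tribal-system construction literally coincide for this choice. By the definition of the stable partition we set $U = Ann(\mathfrak C_\infty(T))$ and $A^{st}(T) = \Z[G^\circ]/U$, so $A^{st}(T)$ is exactly the group $A^{\mathfrak C_\infty}(T) = \Z[G^\circ]/Ann(\mathfrak C_\infty(T))$ appearing in the theorem; likewise the natural map $P^{st}_T$ sending $g\in G^\circ$ to its coset $g+U$ is precisely $P^{\mathfrak C_\infty}_T$. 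Thus there is nothing to compute at this step beyond matching the two sets of symbols, the convention $P_T(s)=0$ being inherited from the theorem's setup.

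Next I would invoke Proposition~\ref{prop:stable_tribal_system}, which asserts that the family $\mathfrak C_\infty$ is indeed a tribal system, i.e. it satisfies the three defining properties: it always contains $\{s\}$, it is stable under the reductions $M_1,M_2,M_3$ in the sense that $\mathfrak C_\infty(T')\cap G = \mathfrak C_\infty(T)$, and every complementary pair lies in a common tribe. With this in hand, the hypotheses of Theorem~\ref{thm:tribe_system_parity_functor} are met for $\mathfrak C = \mathfrak C_\infty$, and the theorem applies verbatim: the maps $P^{\mathfrak C_\infty}_T$ form a parity functor with coefficient groups $A^{\mathfrak C_\infty}(T)$. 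Combining this with the identification of the first paragraph, the maps $P^{st}_T = P^{\mathfrak C_\infty}_T$ form a parity functor with coefficients $A^{st}(T) = A^{\mathfrak C_\infty}(T)$, which is exactly the claim.

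I do not expect any genuine obstacle here, since all of the substance has already been discharged in the two cited results. The properties (P1)--(P3) are built into the definition of the annulator, and the only delicate point, the well-definedness and injectivity of the connecting map $A(f)\colon A^{\mathfrak C}(T)\to A^{\mathfrak C}(T')$ required by (P0), was handled in the proof of Theorem~\ref{thm:tribe_system_parity_functor} through Lemma~\ref{lem:annulator_P0}, which forces $Ann(\mathfrak C(T'))\cap\Z[G^\circ] = Ann(\mathfrak C(T))$; the stability $\mathfrak C_\infty(T')\cap G = \mathfrak C_\infty(T)$ needed to feed into that lemma is exactly what Proposition~\ref{prop:stable_tribal_system} supplies. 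Hence the corollary is immediate, and the only care worth taking is to confirm explicitly that the two definitions of the coefficient group and of the natural map agree, which they do by construction.
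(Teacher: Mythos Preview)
Your proposal is correct and matches the paper's approach exactly: the paper states the corollary as an immediate consequence of Theorem~\ref{thm:tribe_system_parity_functor} and Proposition~\ref{prop:stable_tribal_system}, which is precisely what you do after the (purely notational) identification $A^{st}(T)=A^{\mathfrak C_\infty}(T)$, $P^{st}_T=P^{\mathfrak C_\infty}_T$.
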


\begin{example}\label{exa:stable_parity_functor}

Consider a based matrix $T=(G,s,b)$ with coefficients in $\Z_2$ where $G=\{s,1,2,3,4,5,6,7,8\}$ and $b$ be given by induces the matrix $B$ ($s$ corresponds to the first row and column).

$$
B=
\begin{blockarray}{*{9}{c}}
 s & \mathbf{1} & \mathbf{2} & \mathbf{3} & \mathbf{4} & \mathbf{5} & \mathbf{6} & \mathbf{7} & \mathbf{8}\\
\begin{block}{(c|c|c|c|c|c|c|c|c)} 
 0 & 0 & 0 & 0 & 0 & 0 & 0 & 0 & 0\\
\BAhline
 0 &  0 & 0 &  1 & 0 &  1 & 1 &  0 & 0\\
\BAhline
 0 &  0 & 0 &  1 & 0 &  1 & 1 &  0 & 0\\
\BAhline
 0 &  1 & 1 &  0 & 0 &  0 & 1 &  1 & 0\\
\BAhline
 0 &  0 & 0 &  0 & 0 &  0 & 1 &  1 & 0\\
\BAhline
 0 &  1 & 1 &  0 & 0 &  0 & 0 &  0 & 1\\
\BAhline
 0 &  1 & 1 &  1 & 1 &  0 & 0 &  0 & 1\\
\BAhline
 0 &  0 & 0 &  1 & 1 &  0 & 0 &  0 & 1\\
\BAhline
 0 &  0 & 0 &  0 & 0 &  1 & 1 &  1 & 0\\
\end{block}
\end{blockarray}
$$

Let us find the stable partition. We look for the pairs of rows whose sum is equal to the first row. Those are the rows $1$ and $2$. This means the elements $\{1,2\}$ form one class in the partition $\mathfrak C_1$. We replace the columns with numbers $1$ and $2$ by their sum (it corresponds to the vector $\chi_{\{1,2\}}$) and get the matrix $B_1$. We use separators between rows to distinguish the classes of the partition. Then we look for rows in the matrix $B_1$ as before. Repeating the process, we get the sequence of the partitions $\mathfrak C_2,\mathfrak C_3$ and the corresponding matrices $B_2, B_3$ whose rows correspond to the elements of $G$ and the columns correspond to the classes of the partition.

$$
B_1=
\left(
\begin{array}{c|c|c|c|c|c|c|c}
 0 &   0 &  0 & 0 &  0 & 0 &  0 & 0\\
 \hline
 0 &   0 &  1 & 0 &  1 & 1 &  0 & 0\\
 0 &   0 &  1 & 0 &  1 & 1 &  0 & 0\\
 \hline
 0 &   0 &  0 & 0 &  0 & 1 &  1 & 0\\
 \hline
 0 &   0 &  0 & 0 &  0 & 1 &  1 & 0\\
 \hline
 0 &   0 &  0 & 0 &  0 & 0 &  0 & 1\\
 \hline
 0 &   0 &  1 & 1 &  0 & 0 &  0 & 1\\
 \hline
 0 &   0 &  1 & 1 &  0 & 0 &  0 & 1\\
 \hline
 0 &   0 &  0 & 0 &  1 & 1 &  1 & 0
\end{array}
\right),
\qquad
B_2=
\left(
\begin{array}{c|c|c|c|c|c}
 0 &   0 &  0 &  0 & 0  & 0\\
 \hline
 0 &   0 &  1 &  1 & 1  & 0\\
 0 &   0 &  1 &  1 & 1  & 0\\
 \hline
 0 &   0 &  0 &  0 & 0  & 0\\
 0 &   0 &  0 &  0 & 0  & 0\\
 \hline
 0 &   0 &  0 &  0 & 0  & 1\\
 \hline
 0 &   0 &  0 &  0 & 0  & 1\\
 0 &   0 &  0 &  0 & 0  & 1\\
 \hline
 0 &   0 &  0 &  1 & 0  & 0
\end{array}
\right),
$$

$$
B_3=
\left(
\begin{array}{c|c|c|c|c}
 0 &   0 &  0 &  0  & 0\\
 \hline
 0 &   0 &  1 &  0  & 0\\
 0 &   0 &  1 &  0  & 0\\
 \hline
 0 &   0 &  0 &  0  & 0\\
 0 &   0 &  0 &  0  & 0\\
 \hline
 0 &   0 &  0 &  0  & 1\\
 0 &   0 &  0 &  0  & 1\\
 0 &   0 &  0 &  0  & 1\\
 \hline
 0 &   0 &  0 &  1  & 0
\end{array}
\right).
$$

\begin{gather*}
{\mathfrak C}_0 = \{\{s\},\{1\},\{2\},\{3\},\{4\},\{5\},\{6\},\{7\},\{8\}\},\\
{\mathfrak C}_1 = \{\{s\},\{1,2\},\{3\},\{4\},\{5\},\{6\},\{7\},\{8\}\},\\
{\mathfrak C}_2 = \{\{s\},\{1,2\},\{3,4\},\{5\},\{6, 7\},\{8\}\},\\
{\mathfrak C}_3 = \{\{s\},\{1,2\},\{3,4\},\{5,6,7\},\{8\}\}= {\mathfrak C}_\infty.
\end{gather*}

The partition $\mathfrak C_3$ stabilises (there are no two rows from different classes whose sum gives the first row of the table). Thus, $\mathfrak C_3=\mathfrak C_\infty$ is the stable partition.

The annulator of $\mathfrak C_\infty$ in $\Z[\{1,2,3,4,5,6,7,8\}]$ is generated by the elements $2\cdot\mathbf{i}$, $i=1,\dots,8$ and $\mathbf{1}+\mathbf{2}$, $\mathbf{3}$, $\mathbf{4}$, $\mathbf{5}+\mathbf{6}$, $\mathbf{5}+\mathbf{7}$ (we look for combinations of rows in the table $B_3$ that give zero row). Then $A^{st}=\Z_2\oplus\Z_2\oplus\Z_2$ and the parity map is the following
\begin{gather*}
P^{st}(\mathbf{1})=P^{st}(\mathbf{2})=(1,0,0),\quad P^{st}(\mathbf{3})=P^{st}(\mathbf{4})=(0,0,0),\\ P^{st}(\mathbf{5})=P^{st}(\mathbf{6})=P^{st}(\mathbf{7})=(0,1,0),\quad P^{st}(\mathbf{8})=(0,0,1).
\end{gather*}
\end{example}

Let us modify the definition of parity functor in Theorem~\ref{thm:tribe_system_parity_functor} in order to get a parity on based matrix.

\subsection{Reduced parity functor}

Let $T_0=(G_0,s_0,b_0)$ be a based matrix with coefficients in $H$ and let $\mathfrak C$ be a tribal system defined on the based matrices $T=(G,s,b)$ homologous to $T_0$.

For a based matrix $T=(G,s,b)$, consider the map $\hat P^{\mathfrak C}_T$ from $\Z(G^\circ)$ to the group $\hat A^{\mathfrak C}(T)=H[\mathfrak C(T)]/\langle \hat b(s)\rangle$ where $\langle \hat b(s)\rangle$ is the cyclic subgroup generated by
\begin{equation}\label{eq:bs_element}
\hat b(s)=\sum_{C\in\mathfrak C(T)} b(s,\chi_C)\cdot C,
\end{equation}
defined by the formula
\begin{equation}\label{eq:extended_parity_functor}
\hat P^{\mathfrak C}_T(g)=\sum_{C\in\mathfrak C(T)} b(g,\chi_C)\cdot C.
\end{equation}

The kernel of the homomorphism $\hat P^{\mathfrak C}_T$ is the annulator $Ann(\mathfrak C(T))$. Hence, the group $A^{\mathfrak C}(T)$ embeds in $\hat A^{\mathfrak C}(T)$. The family of maps $\hat P^{\mathfrak C}_T$ is a parity functor with coefficients in the groups $\hat A^{\mathfrak C}(T)$. For an operation $f\colon T\to T'$, $T=(G,s,b)$, $T'=(G',s,b')$, of type $M_1, M_2$ or $M_3$ the monomorphism $\hat A^{\mathfrak C}(f)\colon \hat A^{\mathfrak C}(T)\to \hat A^{\mathfrak C}(T')$ is given by the formula

\begin{equation}\label{eq:extended_parity_functor_coefficients}
\hat A^{\mathfrak C}(f)(\sum_{C\in\mathfrak C(T)}\lambda_C\cdot C)=\sum_{C\in\mathfrak C(T)}\lambda_C\cdot f(C)+l\cdot\lambda_s\cdot C'_f
\end{equation}

where $f(C)\in\mathfrak C(T')$ is the unique tribe such that $f(C)\cap G=C$, $\lambda_s=\lambda_{\{s\}}$, $C'_f\in\mathfrak C(T')$  is the tribe containing $G'\setminus G$, and $l=0$ when $f$ is of type $M_1$ and $l=1$ when $f$ is of type $M_2$ or $M_3$.

Let $T_\bullet=(G_\bullet, s, b_\bullet)$ be a primitive based matrix which is obtained from $T$ by deleting annihilating, core and complementary elements. Then $G_\bullet\subset G$.

\begin{definition}
A tribe $C\in\mathfrak C(T)$ is called {\em primitive} if $C\cap G_\bullet\ne\emptyset$. Denote the subset of primitive tribes in $\mathfrak C(T)$ by $\mathfrak C(T)_{prim}$.
\end{definition}

Note that by definition $\{s\}\in \mathfrak C(T)_{prim}$.

\begin{proposition}\label{prop:primitive_tribe_invariance}
1. The primitive tribes do not depend on the choice of the primitive based matrix $T_\bullet$, that is if $C$ is primitive with respect to a primitive based matrix $T_\bullet=(G_\bullet, s, b_\bullet)$ and $T'_\bullet=(G'_\bullet, s, b'_\bullet)$ is another primitive based matrix such that $G'_\bullet\subset G$ then $C\cap G'_\bullet\ne\emptyset$.

2. Let $T'=(G',s,b')$ be obtained from $T$ by an operation $M_1$, $M_2$ or $M_3$. Then there is a bijection between the primitive tribes in $\mathfrak C(T)$ and $\mathfrak C(T')$: a tribe $C'\in\mathfrak C(T')_{prim}$ iff the tribe $C'\cap G\in \mathfrak C(T)_{prim}$.

3. The support of the element $\hat b(s)\in H[\mathfrak C(T)]$ includes only primitive tribes: $\hat b(s)=\sum_{C\in\mathfrak C(T)_{prim}} b(s,\chi_C)\cdot C$
\end{proposition}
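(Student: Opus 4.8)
The plan is to prove the three parts in order, using the characterization of primitive based matrices from Lemma~\ref{lem:unique_primitive_based_matrix} and the tribal-system axioms repeatedly. For part~1, the key observation is that two primitive based matrices $T_\bullet=(G_\bullet,s,b_\bullet)$ and $T'_\bullet=(G'_\bullet,s,b'_\bullet)$ obtained from the same $T$ are homologous to each other, and by Lemma~\ref{lem:unique_primitive_based_matrix} they are isomorphic. I would argue that the isomorphism $\phi\colon T_\bullet\to T'_\bullet$ is compatible with the inclusion into $G$, so that each $g\in G_\bullet$ is connected to $\phi(g)\in G'_\bullet$ by a chain of moves that only delete annihilating, core, and complementary elements. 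The second tribal-system axiom then guarantees that surviving elements stay in the same tribe, which forces $g$ and $\phi(g)$ into the same tribe $C$; hence $C\cap G_\bullet\neq\emptyset$ implies $C\cap G'_\bullet\neq\emptyset$. The delicate point here is justifying that the passage $T_\bullet\to T\to T'_\bullet$ can be arranged so that primitivity-defining elements are never destroyed and recreated in a way that crosses tribe boundaries.

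For part~2, I would use the bijection $C\mapsto C'=C\cup(G'\setminus G)$-type correspondence already furnished by the tribal-system axiom, which states $\mathfrak C(T)=\mathfrak C(T')\cap G$, so tribes of $T'$ restrict bijectively to tribes of $T$ via $C'\mapsto C'\cap G$. The content to verify is that this bijection respects primitivity. When $T'$ is obtained from $T$ by $M_1$, $M_2$ or $M_3$, the primitive based matrix $T_\bullet$ can be taken the same for both: adding an annihilating, core, or complementary element (or pair) does not change the primitive reduction, so $G_\bullet\subset G\subset G'$ serves both. Then $C'\in\mathfrak C(T')_{prim}$ means $C'\cap G_\bullet\neq\emptyset$; since $G_\bullet\subset G$, this is equivalent to $(C'\cap G)\cap G_\bullet\neq\emptyset$, i.e.\ $C'\cap G\in\mathfrak C(T)_{prim}$. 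This step is essentially bookkeeping once part~1 secures that $G_\bullet$ is a legitimate common primitive reduction.

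For part~3, I would show that any non-primitive tribe $C$ (one with $C\cap G_\bullet=\emptyset$) contributes zero to $\hat b(s)$, i.e.\ $b(s,\chi_C)=0$. The idea is that a non-primitive tribe consists entirely of elements lying in $G\setminus G_\bullet$, which were removed during reduction to $T_\bullet$; such elements are annihilating, core, or come in complementary pairs in the successive reductions. I would track how $b(s,\chi_C)$ behaves: an annihilating element $g$ has $b(s,g)=0$, a core element has $b(s,g)=b(s,s)=0$, and a complementary pair $g_1,g_2$ satisfies $b(s,g_1)+b(s,g_2)=b(s,s)=0$; summing over a tribe built from such elements gives $b(s,\chi_C)=0$. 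The subtlety is that the reduction happens in stages, and a tribe may mix elements removed at different stages, so I would either induct on the number of reduction steps or argue directly that the annulator structure forces $\chi_C$ to evaluate trivially against $s$ whenever $C$ avoids $G_\bullet$.

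The main obstacle I anticipate is part~1, specifically making precise the claim that the two primitive reductions are related by tribe-preserving moves. The homology relation of Definition~\ref{def:bm_homology} allows inverse moves $M_i^{-1}$, so the path from $T_\bullet$ to $T'_\bullet$ through $T$ may involve both additions and deletions; the tribal-system axiom is stated for additions (forward moves), and one must check carefully that its consequence---surviving paired elements remain paired---propagates correctly across a mixed chain. I expect the cleanest route is to invoke the uniqueness in Lemma~\ref{lem:unique_primitive_based_matrix} to fix the isomorphism type first, then realize the identification element-by-element through forward moves only, so that the second tribal axiom applies directly without needing to reason about $M_i^{-1}$.
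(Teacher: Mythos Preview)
Your plans for parts~2 and~3 are essentially what the paper does and are correct: for part~2 you share a single primitive reduction $G_\bullet\subset G\subset G'$, and for part~3 you observe that $b(s,\chi_C)$ is unchanged when $C$ loses an annihilating element, a core element, or a complementary pair (the paper phrases this as $b(s,\chi_{C\cap G'})=b(s,\chi_C)$, citing Lemma~\ref{lem:annulator_P0}), so a non-primitive tribe eventually empties and $b(s,\chi_C)=0$.

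Part~1, however, has a real gap. Lemma~\ref{lem:unique_primitive_based_matrix} gives you only an \emph{abstract} isomorphism $\phi\colon T_\bullet\to T'_\bullet$; nothing in that lemma says $\phi$ is ``compatible with the inclusion into $G$,'' i.e.\ that $g$ and $\phi(g)$ lie in the same tribe of $\mathfrak C(T)$. Your proposed fix---realize $\phi$ by forward moves only---cannot work: forward moves $M_i$ enlarge the underlying set, so starting from $T_\bullet$ you can reach $T$ but never land on $T'_\bullet$ without using $M_i^{-1}$. And once you allow a mixed chain $T_\bullet\to T\to T'_\bullet$, the tribal axiom only controls elements that survive the \emph{whole} chain; an element $g\in G_\bullet\setminus G'_\bullet$ gets deleted on the way down, and you have no mechanism to produce a replacement in $C\cap G'_\bullet$.

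The paper's argument is quite different. It does not use the isomorphism of Lemma~\ref{lem:unique_primitive_based_matrix} at all. Instead it inducts on $d=|G|-|G_\bullet|$ and runs a diamond-lemma/confluence argument on the \emph{first} reduction steps $f_1,f'_1$ of the two chains $T\to T_\bullet$ and $T\to T'_\bullet$: if $f_1=f'_1$ reduce directly; if they commute form the pushout $T''=f_1f'_1(T)$ and invoke the induction hypothesis twice; if they do not commute the only obstruction is two $M_3^{-1}$ moves removing overlapping complementary pairs $\{g_0,g_1\}$ and $\{g_0,g'_1\}$, and here the third tribal-system axiom (complementary elements share a tribe) forces $g_0,g_1,g'_1$ into a single tribe, which is exactly what makes the argument close. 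That use of the complementary-pair axiom is the key idea your outline is missing.
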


\begin{proof}
1. Let $T\stackrel{f_1}{\longrightarrow} T_1 \stackrel{f_2}{\longrightarrow}\cdots\stackrel{f_n}{\longrightarrow}T_\bullet$ and
$T\stackrel{f'_1}{\longrightarrow} T'_1 \stackrel{f'_2}{\longrightarrow}\cdots\stackrel{f'_{n'}}{\longrightarrow}T'_\bullet$ be sequences of operations $M_k^{-1}$, $k=1,2,3$, that reduce $T$ to the primitive based matrices. We prove the first statement of the proposition by induction on $d=|G|-|G_\bullet|$. Note that this number does not depend on the choice of primitive matrix because all homologous primitive matrix are isomorphic and have the same number of elements.

If $d=0$ then $T_\bullet=T=T'_\bullet$, and the statement is trivial.

Assume the statement holds for all $k<d$. Consider the operations $f_1$ and $f'_1$. If $f_1=f'_1$ consider the based matrix $T''=f_1(T)$, $T''=(G'',s,b'')$. Then $G_\bullet\subset G''$ and $G'_\bullet\subset G''$. If $C\in\mathfrak C(T)$ then $C\cap G_\bullet\ne\emptyset$. Hence, $(C\cap G'')\cap G_\bullet\ne\emptyset$, i.e. the tribe $C''=C\cap G''$ is primitive in $\mathfrak C(T'')=\mathfrak C(T)\cap G''$. By induction $C''\cap G'_\bullet\ne\emptyset$. Then $C\cap G'_\bullet\ne\emptyset$.

Let $f_1\ne f'_1$ commute. Let $T_1=f_1(T)$, $T'_1=f'_1(T)$ and $T''=f'_1(T_1)=f_1(T'_1)$. Then $G_\bullet\subset G_1$ and $G'_\bullet\subset G'_1$.  By operations $M_k^{-1}$, $k=1,2,3$, the based matrix $T''$ can be reduced to a primitive based matrix $T''_\bullet=(G''_\bullet,s, b''_\bullet)$. Then $G''_\bullet\subset G''=G_1\cap G'_1$.

Let $C\in\mathfrak C(T)$ be primitive, i.e. $C\cap G_\bullet\ne\emptyset$.  Let $C_1=C\cap G_1\in\mathfrak C(T_1)$ and $C'_1=C\cap G'_1\in\mathfrak C(T'_1)$. By induction, $C_1\cap G_\bullet=C\cap G_\bullet\ne\emptyset$ implies $C_1\cap G''_\bullet\ne\emptyset$. But
$C_1\cap G''_\bullet=C\cap G''_\bullet=C'_1\cap G''_\bullet$. Then $C'_1\cap G''_\bullet\ne\emptyset$. Hence, by induction $C\cap G'_\bullet=C'_1\cap G'_\bullet\ne\emptyset$.

Let $f_1\ne f'_1$ do not commute. If one of the operations (say, $f_1$) is of type $M_3^{-1}$ and the other (say, $f'_1$) is of type $M_1^{-1}$ or $M_2^{-1}$ then $f_1$ removes a pair of complementary core and annihilating elements. Then $f_1=f''\circ f'_1$ for some operation $f''$ of type $M_1^{-1}$ or $M_2^{-1}$, and we can treat this case like the case $f_1=f'_1$ by considering the based matrix $T''=f'_1(T)$.

The other option is when $f_1$ and $f'_1$ are of type $M_3^{-1}$. Denote $T_1=f_1(T)$ and $T'_1=f'_1(T)$. The sets $G\setminus G_1=\{g_0,g'_1\}$ and $G\setminus G'_1=\{g_0,g_1\}$ are intersecting pairs of complementary elements. Then $b(g,g_1)=b(g,g'_1)$ for all $g\in G$, and the map $h\colon G_1\to G'_1$, which replaces the element $g_1\in G_1$ with the element $g'_1\in G'_1$, is an isomorphism of the based matrices $T_1$ and $T'_1$. Hence, the based matrix $T''_\bullet=h(T_\bullet)$ is a primitive submatrix of $T'_1$ which is homologous to $T'_1$.

Let $C\in\mathfrak C(T)$ be primitive, i.e. $C\cap G_\bullet\ne\emptyset$. If there exists $g''\ne g_1$ such that $g''\in C\cap G_\bullet$ then $g''\in C \cap G''_\bullet = C'_1 \cap G''_\bullet$ where $C'_1=C\cap G'_1$. Then by induction $C\cap G'_\bullet=C'_1 \cap G'_\bullet\ne\emptyset$.

Assume that $C\cap G_\bullet=\{g_1\}$. Since $g_0$ and $g_1$ as well as $g_0$ and $g'_1$ are complementary in $T$, they all belong to one tribe, i.e. $g_0,g_1,g'_1\in C$. Then $g'_1\in C\cap G''_\bullet$, so $C'_1 \cap G''_\bullet\ne\emptyset$ where $C'_1=C\cap G'_1$. Then by induction $C\cap G'_\bullet=C'_1 \cap G'_\bullet\ne\emptyset$.

2. Let $T_\bullet=(G_\bullet, s, b_\bullet)$ be a primitive based matrix obtained from $T$ by operations $M_i^{-1}, i=1,2,3$. Then $G_\bullet\subset G$. Then we have the equivalences
$$
C'\in\mathfrak(T')_{prim} \Longleftrightarrow C'\cap G_\bullet = (C'\cap G)\cap G_\bullet\ne\emptyset \Longleftrightarrow (C'\cap G)\in\mathfrak(T)_{prim}.
$$

3. By the proof of Lemma~\ref{lem:annulator_P0}, for any reducing operation $f\colon T\to T'$ of type $M_i^{-1}$, $i=1,2,3$, and any tribe $C\in\mathfrak C(T)$ we have $b(s,\chi_{C\cap G'})=b(s,\chi_C)$ where $T=(G,s,b)$ and $T'=(G',s,b|_{G'})$. If $C\in\mathfrak C(T)$ is not primitive then it reduces to an empty tribe on the way to a primitive based matrix. Then $b(s,\chi_C)=b(s,\chi_\emptyset)=0$. Thus,
$$\hat b(s)=\sum_{C\in\mathfrak C(T)} b(s,\chi_C)\cdot C=\sum_{C\in\mathfrak C(T)_{prim}} b(s,\chi_C)\cdot C.$$
\end{proof}

Thus, for any based matrix $T$ a distinguished set $\mathfrak(T)_{prim}$ of the primitive tribes is defined, and any transformation $f\colon T\to T'$ to a homologous based matrix $T'$ identifies the sets $\mathfrak(T)_{prim}$ and $\mathfrak(T')_{prim}$.

Let us consider another distinguished tribe in $\mathfrak C(T)$.

\begin{proposition}
Let $T_1=(G_1,s,b_1)$ and $T_2=(G_2,s,b_2)$ be homologous to $T$ and $g_i\in G_i\cap G$, $i=1,2$ be an element which is annihilating or core in $G_i$. Then $g_1$ and $g_2$ belong to one tribe in $\mathfrak C(T)$.
\end{proposition}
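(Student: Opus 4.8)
The plan is to reduce the statement to a fact about a single based matrix and then transport the resulting distinguished tribe along a homology connecting $T_1$, $T$ and $T_2$.

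First I would prove the within-one-matrix lemma: in any based matrix $S=(G_S,s,b_S)$ all annihilating and all core elements lie in one common tribe of $\mathfrak C(S)$. To prove it, adjoin to $S$ a fresh annihilating element $a$ (a move $M_1$) and a fresh core element $c$ (a move $M_2$), obtaining $S'$; one checks that adjoining $a$ and $c$ changes neither the annihilating nor the core elements of $G_S$. In $S'$ every annihilating element is complementary to $c$, every core element is complementary to $a$, and $a,c$ are complementary to each other, so by the tribal axiom on complementary elements they all lie in one tribe $C^{\mathrm{ann}}(S')$. Intersecting with $G_S$ through $\mathfrak C(S)=\mathfrak C(S')\cap G_S$ puts every annihilating and core element of $S$ into the single tribe $C^{\mathrm{ann}}(S):=C^{\mathrm{ann}}(S')\cap G_S$.

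Second, I would record that membership in a common tribe is a homology invariant: for an elementary move $f\colon T\to T'$ the identity $\mathfrak C(T)=\mathfrak C(T')\cap G$ shows that any $g,g'\in G$ share a tribe in $T$ if and only if they share one in $T'$, and composing along a homology the same holds for elements present throughout. Since $g_1\in C^{\mathrm{ann}}(T_1)$, $g_2\in C^{\mathrm{ann}}(T_2)$ and $g_1,g_2\in G$, it therefore suffices to transport $g_1$'s tribe from $T_1$ to $T$ and $g_2$'s tribe from $T_2$ to $T$ and to prove that the two resulting tribes $C_1,C_2\in\mathfrak C(T)$ coincide. To make $g_i$ visibly a member of an annihilating tribe under these transports, I would adjoin to $T_i$ a fresh complementary partner (a core element if $g_i$ is annihilating, an annihilating element if $g_i$ is core), so that $g_i$ enters the annihilating tribe through a genuine complementary pair that the axioms must respect.

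The equality $C_1=C_2$ is the heart of the matter, and I expect it to be the main obstacle; it amounts to the global well-definedness of the annihilating tribe, and I would prove it by induction on the length of a homology from $T_1$ to $T_2$, analysing a single move $M_i^{\pm1}$ at a time and invoking Proposition~\ref{prop:primitive_tribe_invariance}. Two difficulties arise, both already visible in the proof of Proposition~\ref{prop:primitive_tribe_invariance}. The annihilating tribe may momentarily vanish, because a homology can pass through a based matrix (for instance the primitive matrix $T_\bullet$) with no annihilating or core element; I would avoid this by transporting the \emph{concrete} elements $g_1,g_2$, which survive to $T$, rather than an abstract tribe, using that same-tribe membership transports in both directions across a single move, and by noting via Proposition~\ref{prop:primitive_tribe_invariance} that $g_i$, being removable, lies in a non-primitive tribe. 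The second difficulty is that different reduction sequences must agree; here I would run the same commuting / non-commuting case analysis as in Proposition~\ref{prop:primitive_tribe_invariance}, the delicate case once more being two non-commuting $M_3^{-1}$ moves deleting overlapping complementary pairs, where the shared element forces the relevant elements into one tribe and the induction closes.
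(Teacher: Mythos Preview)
Your within-one-matrix lemma and the transport of same-tribe membership across an elementary move are both correct and are essentially what the paper uses. The divergence is at the step you rightly call the main obstacle, showing $C_1=C_2$. There your plan contains a concrete error: the claim that $g_i$, being removable, lies in a non-primitive tribe is false in general --- the paper itself remarks just after this proposition that the zero tribe can be primitive, since it may contain elements of $G_\bullet$ beyond the annihilating and core ones --- so Proposition~\ref{prop:primitive_tribe_invariance} does not pin down $C_i$ in the way you suggest. More broadly, the induction and commuting/non-commuting case analysis you sketch is left vague; it is not clear what the inductive hypothesis would even be when the two homology chains $T_1\to T$ and $T_2\to T$ share no intermediate matrix.

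The paper avoids this obstacle entirely with a trick you do not use: adjoin a single fresh annihilating element $g'$ not only to $T_i$ but uniformly to \emph{every} based matrix along both chains $T_1\to T$ and $T_2\to T$, obtaining lifted chains $\tilde T_i\to\tilde T$. These are still valid homologies, and now $g'$ survives throughout. By your own within-one-matrix lemma (or directly via the complementary-pair axiom after also adjoining a core element), $g_i$ and $g'$ share a tribe in $\mathfrak C(\tilde T_i)$; transporting along the lifted chain gives $g_i\sim g'$ in $\mathfrak C(\tilde T)$ for each $i$; transitivity through the common element $g'$ gives $g_1\sim g_2$ in $\mathfrak C(\tilde T)$; and restricting via $\mathfrak C(T)=\mathfrak C(\tilde T)\cap G$ finishes. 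The point is that the universal reference element $g'$, present in every matrix of both chains, is exactly what lets one compare tribes arising from two different homologies without any induction or case analysis.
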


\begin{proof}
  Let $f_i\colon T_i\to T$ be the sequence of operations that transform $T_i$ to $T$. Add $T$, $T_1$, $T_2$  and all the intermediate diagrams an element $g'$ which is annihilating for all the diagrams $\tilde T$, $\tilde T_1$, $\tilde T_2$ and the intermediate ones. Then $g_i$ and $g'$ belong to one tribe in $\mathfrak C(T_i)$, $i=1,2$. Hence, they belong to one tribe in $\mathfrak C(T)$. Then by transitivity $g_1$ and $g_2$ belong to one tribe in $\mathfrak C(T)$.
\end{proof}

Let $C_0((T)\in\mathfrak C(T)$ be the tribe which contains all core and annihilating elements. We call it the {\em zero tribe}.

The zero tribe $C_0(T)$ can be empty. On the other hand, $C_0(T)$ can be primitive since it may include other elements besides the core and annihilating ones.

\begin{corollary}\label{cor:zero_tribe_invariance}
Let $T'=(G',s,b')$ be obtained from $T$ by an operation $M_1$, $M_2$ or $M_3$ and $C_0=C_0(T)$ and $C'_0=C_0(T')$ be the zero tribes in $\mathfrak C(T)$ and $\mathfrak C(T')$ correspondingly. Then $C_0=C'_0\cap G$.
\end{corollary}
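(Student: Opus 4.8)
The plan is to reduce the statement to the behaviour of a single surviving annihilating element under the operation $f\colon T\to T'$, in the spirit of the preceding Proposition. First I would record the structural fact, coming from the tribal system axioms and Lemma~\ref{lem:annulator_P0}(2), that restriction $C'\mapsto C'\cap G$ carries the tribes of $\mathfrak C(T')$ to the tribes of $\mathfrak C(T)$; in particular $C'_0\cap G$ is \emph{already} a tribe of $\mathfrak C(T)$. Since two classes of one partition coincide as soon as they share an element, it then suffices to exhibit a common element of $C'_0\cap G$ and $C_0$, or equivalently to show that $C'_0\cap G$ contains the annihilating and core elements that pin down $C_0$.

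The difficulty is that an annihilating or core element of $T$ need not remain annihilating or core in $T'$: under an operation of type $M_3$ with new complementary pair $e_1,e_2$, an old annihilating $g$ only satisfies $b'(g,e_1)+b'(g,e_2)=0$, and the two summands may individually be nonzero. Moreover either of $T$, $T'$ may have \emph{no} annihilating or core elements at all, in which case the zero tribe is determined only through homologous matrices. Both issues are handled by the auxiliary-element device from the proof of the previous Proposition: adjoin to $T$ and to $T'$ one extra element $g'$ declared annihilating, obtaining $\tilde T=T\sqcup\{g'\}$ and $\tilde T'=T'\sqcup\{g'\}$. Since $g'$ is annihilating and independent of everything, $f$ lifts to an operation $\tilde f\colon\tilde T\to\tilde T'$ of the same type $M_1$, $M_2$ or $M_3$, and $g'$ survives $\tilde f$. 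Note also that adjoining an annihilating element preserves the annihilating/core status of every old element, so an annihilating or core element of $T$ (resp. $T'$) stays such in $\tilde T$ (resp. $\tilde T'$).

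With this setup I would run the following chain. Let $C_0(\tilde T)$ and $C_0(\tilde T')$ be the tribes containing $g'$ in $\mathfrak C(\tilde T)$ and $\mathfrak C(\tilde T')$; these are the zero tribes of $\tilde T$ and $\tilde T'$, because every annihilating or core element lies in the same tribe as $g'$ by the previous Proposition. The same Proposition, applied to the annihilating element $g'$ viewed through the homology $\tilde T\sim T$ and $\tilde T'\sim T'$, gives $C_0(T)=C_0(\tilde T)\cap G$ and $C_0(T')=C_0(\tilde T')\cap G'$. Since $\tilde T'$ is obtained from $\tilde T$ by the single operation $\tilde f$ and $g'$ survives, the tribal system axiom yields $C_0(\tilde T')\cap\tilde G=C_0(\tilde T)$, where $\tilde G=G\sqcup\{g'\}$. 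Intersecting with $G$ and using $G\subset G'$ and $G\subset\tilde G$, I would conclude
\[
C'_0\cap G=\big(C_0(\tilde T')\cap G'\big)\cap G=C_0(\tilde T')\cap G=\big(C_0(\tilde T')\cap\tilde G\big)\cap G=C_0(\tilde T)\cap G=C_0,
\]
which is the desired equality.

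The main obstacle I expect is bookkeeping rather than conceptual: verifying that $f$ genuinely lifts to $\tilde f$ of the same type, with $g'$ annihilating and surviving in every matrix in sight, so that the tribal system axioms and Lemma~\ref{lem:annulator_P0} apply to $\tilde f$ verbatim. Once the auxiliary element is installed, every equality in the chain above is a direct application of the tribal system axioms and the preceding Proposition.
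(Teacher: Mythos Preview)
Your approach is correct and is exactly the argument the paper leaves implicit: the Corollary is stated without proof, as an immediate consequence of the preceding Proposition, and the intended justification is precisely the auxiliary-annihilating-element device you describe. The only step worth a remark is the identification $C_0(T)=C_0(\tilde T)\cap G$ (and its primed analogue): the Proposition as stated applies to elements lying in $G$, while $g'\notin G$, so strictly speaking you are not ``applying the Proposition to $g'$'' but rather using that this equality is the operational definition of the zero tribe (add an annihilating witness, take its tribe, restrict back), whose well-definedness is what the Proposition guarantees. With that reading your chain of equalities goes through verbatim.
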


Corollary~\ref{cor:zero_tribe_invariance} means the zero tribe is equivariant, that any transformation $f\colon T\to T'$ from $T$ to a homologous based matrix $T'$ maps elements of the zero tribes $\mathfrak(T)_{prim}$ to the zero tribe $\mathfrak(T')_{prim}$.

Consider a based matrix $T=(G,s,b)$.  Denote $$\mathfrak C(T)_{prim}^*=\mathfrak C(T)_{prim}\setminus\{C_0(T)\}.$$ Let
$\tilde A^{\mathfrak C}(T)=H[\mathfrak C(T)_{prim}^*]/\langle\tilde b(s)\rangle$
where
\begin{equation}\label{eq:tilde_bs_element}
\tilde b(s)=\sum_{C\in\mathfrak C(T)_{prim}^*} b(s,\chi_C)\cdot C.
\end{equation}

Define the maps
$\tilde P^{\mathfrak C}_T\colon G\to \tilde A^{\mathfrak C}(T)$ by the formula
\begin{equation}\label{eq:reduced_parity_functor}
\tilde P^{\mathfrak C}_T(g)= \sum_{C\in\mathfrak C(T)_{prim}^*} \left(b(g,\chi_C)-\left\lfloor\frac{|C|}2\right\rfloor\cdot b(g,s)\right)\cdot C
\end{equation}
where $\lfloor\cdot\rfloor$ is the floor function.

For an operation $f\colon T\to T'$, $T=(G,s,b)$, $T'=(G',s,b')$, of type $M_1, M_2$ or $M_3$  we define a homomorphism $\tilde A^{\mathfrak C}(f)\colon \tilde A^{\mathfrak C}(T)\to \tilde A^{\mathfrak C}(T')$ by the formula
\begin{equation}\label{eq:reduced_parity_functor_coefficients}
\tilde A^{\mathfrak C}(f)\left(\sum_{C\in\mathfrak C(T)_{prim}^*}\lambda_C\cdot C\right)=\sum_{C\in\mathfrak C(T)_{prim}^*}\lambda_C\cdot f(C)
\end{equation}
where $f(C)\in \mathfrak C(T')_{prim}$ is the unique primitive tribe such that $f(C)\cap G = C$.

\begin{theorem}\label{thm:reduced_parity_functor_based_matrices}
 The maps $\tilde P^{\mathfrak C}_T$ define a parity functor on based matrices.
\end{theorem}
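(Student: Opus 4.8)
The plan is to verify the four axioms of Definition~\ref{def:parity_functor_based_matrices} for the family $\tilde P^{\mathfrak C}_T$. The relation axioms (P1), (P2), (P3) are the routine part, and all three collapse to a single observation: whenever elements $g_1,\dots,g_k\in G^\circ$ (with $k\le 3$) satisfy $\sum_i b(g_i,h)=\varepsilon\cdot b(s,h)$ for all $h$ — with $\varepsilon=0$ for a single annihilating element and $\varepsilon=1$ for a core element, a complementary pair, or a triple as in (P3) — then setting $h=s$ gives $\sum_i b(g_i,s)=\varepsilon\, b(s,s)=0$. Hence the floor-function terms $\left\lfloor |C|/2\right\rfloor\sum_i b(g_i,s)$ in~\eqref{eq:reduced_parity_functor} vanish identically, and summing~\eqref{eq:reduced_parity_functor} over the $g_i$ leaves $\sum_{C}\varepsilon\cdot b(s,\chi_C)\cdot C=\varepsilon\cdot\tilde b(s)$, which is zero in $\tilde A^{\mathfrak C}(T)=H[\mathfrak C(T)_{prim}^*]/\langle\tilde b(s)\rangle$. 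The same computation applied to a single core element (or to $g=s$) gives $\tilde P^{\mathfrak C}_T(g)=\tilde b(s)=0$, and to a single annihilating element gives $\tilde P^{\mathfrak C}_T(g)=0$.

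The substance is axiom (P0). First I would record the combinatorial input already proved: by Proposition~\ref{prop:primitive_tribe_invariance}(2) an operation $f\colon T\to T'$ of type $M_1,M_2,M_3$ induces a bijection $\mathfrak C(T)_{prim}\to\mathfrak C(T')_{prim}$, $C\mapsto f(C)$, characterised by $f(C)\cap G=C$; and by Corollary~\ref{cor:zero_tribe_invariance} it sends $C_0(T)$ to $C_0(T')$, so it restricts to a bijection $\mathfrak C(T)_{prim}^*\to\mathfrak C(T')_{prim}^*$ (note that the singleton tribe $\{s\}$, which is always primitive and distinct from the zero tribe, is fixed by $f$). Consequently the $H$-linear map $C\mapsto f(C)$ is an isomorphism of free modules $H[\mathfrak C(T)_{prim}^*]\to H[\mathfrak C(T')_{prim}^*]$. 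To see that~\eqref{eq:reduced_parity_functor_coefficients} descends to a monomorphism of quotients, I would check that this isomorphism carries the generator $\tilde b(s)$ to $\tilde b'(s)$: for each $C\in\mathfrak C(T)_{prim}^*$ one has $b'(s,\chi_{f(C)})=b(s,\chi_C)$, since the element(s) added by $M_1,M_2$ lie in the discarded zero tribe, and in the $M_3$ case the added complementary pair $\{g_0,g'_0\}$ contributes $b'(s,g_0)+b'(s,g'_0)=b'(s,s)=0$ to whatever tribe it joins. An isomorphism of free modules sending $\tilde b(s)$ to $\tilde b'(s)$ maps $\langle\tilde b(s)\rangle$ isomorphically onto $\langle\tilde b'(s)\rangle$, whence the induced map $\tilde A^{\mathfrak C}(f)$ on quotients is a monomorphism.

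It remains to prove the intertwining identity $\tilde P^{\mathfrak C}_{T'}(g)=\tilde A^{\mathfrak C}(f)\bigl(\tilde P^{\mathfrak C}_T(g)\bigr)$ for $g\in G$, which I would establish tribe by tribe, splitting on the type of $f$. For $M_1$ and $M_2$ the new element lies in the zero tribe, so each non-zero primitive tribe is literally unchanged, $|f(C)|=|C|$, $b'(g,\chi_{f(C)})=b(g,\chi_C)$ and $b'(g,s)=b(g,s)$, giving equality of coefficients term by term. The decisive case is $M_3$ when the complementary pair $\{g_0,g'_0\}$ is absorbed into a non-zero primitive tribe $C$: then $|f(C)|=|C|+2$, so $\left\lfloor |f(C)|/2\right\rfloor=\left\lfloor |C|/2\right\rfloor+1$, while $b'(g,\chi_{f(C)})=b(g,\chi_C)+b'(g,g_0)+b'(g,g'_0)=b(g,\chi_C)+b(g,s)$ by complementarity. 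The coefficient of $f(C)$ in $\tilde P^{\mathfrak C}_{T'}(g)$ is therefore
\[
\bigl(b(g,\chi_C)+b(g,s)\bigr)-\left(\left\lfloor\tfrac{|C|}2\right\rfloor+1\right)b(g,s)=b(g,\chi_C)-\left\lfloor\tfrac{|C|}2\right\rfloor b(g,s),
\]
which is exactly the coefficient of $C$ in $\tilde P^{\mathfrak C}_T(g)$; when instead the pair lands in the zero tribe or in a non-primitive tribe, all primitive tribes are unchanged as sets and the equality is immediate. This cancellation is precisely what the floor-function correction in~\eqref{eq:reduced_parity_functor} is engineered to achieve, absorbing the extra $b(g,s)$ produced by each complementary pair that joins a tribe. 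I expect this $M_3$ bookkeeping — determining which tribe receives $\{g_0,g'_0\}$ and confirming the size-parity cancellation — to be the only genuinely delicate point of the argument; everything else is the formal transport of the already-established facts about primitive and zero tribes.
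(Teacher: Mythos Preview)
Your proposal is correct and follows essentially the same route as the paper. The only noteworthy difference is in how (P1)--(P3) are dispatched: the paper simply observes that $\tilde P^{\mathfrak C}_T$ factors through $\hat P^{\mathfrak C}_T$ (for which the relation axioms are already known), whereas you verify them directly via the observation $\sum_i b(g_i,s)=0$ so that the floor terms drop out and the remaining sum is $\varepsilon\cdot\tilde b(s)$. Your treatment of (P0) --- the bijection $\mathfrak C(T)_{prim}^*\to\mathfrak C(T')_{prim}^*$ from Proposition~\ref{prop:primitive_tribe_invariance} and Corollary~\ref{cor:zero_tribe_invariance}, the check that $\tilde b(s)\mapsto\tilde b'(s)$, and the tribe-by-tribe verification of the intertwining identity with the $M_3$ cancellation $\lfloor(|C|+2)/2\rfloor=\lfloor|C|/2\rfloor+1$ --- is exactly the paper's argument.
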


The parity functor is called the {\em reduced parity functor associated with the tribal system $\mathfrak C$}.

\begin{proof}
  Since $\tilde P^{\mathfrak C}_T$ factors through $\hat P^{\mathfrak C}_T$, the conditions (P1),(P2) and (P3) are valid.

  Let us check the property (P0). Let $T=(G,s,b)$ be a based matrix and $T'=f(T)$  where $f$ is of type $M_i$, $i=1,2,3$. Let $T'=(G',s,b')$.

  First we show that $\tilde A^{\mathfrak C}(f)$ is a well defined isomorphism. By Proposition~\ref{prop:primitive_tribe_invariance} the map $f$ establishes a bijection  $f\colon \mathfrak C(T)_{prim}\to \mathfrak C(T')_{prim}$. By Corollary~\ref{cor:zero_tribe_invariance} $f(C_0(T))=C_0(T')$ if $C_0(T)\in \mathfrak C(T)_{prim}$. Then $f$ defines an isomorphism $f\colon H[\mathfrak C(T)_{prim}^*]\to H[\mathfrak C(T')_{prim}^*]$.

  For any tribe $C'\in\mathfrak C(T')$ we have $b'(s,\chi_C)=b(s,\chi_{C'\cap G})$. Then $f(\tilde b(s))=\tilde b(s)$. Hence, the isomorphism $f$ induces an isomorphism $\tilde A^{\mathfrak C}(T)\to \tilde A^{\mathfrak C}(T')$ that coincides with $\tilde A^{\mathfrak C}(f)$.

  Let us check the equality $\tilde P^{\mathfrak C}_{T'}|G=\tilde A^{\mathfrak C}(f)\circ\tilde P^{\mathfrak C}_{T}$. It is enough to verify that the terms in the left and the right part of the equality which correspond to some tribe $C\in\mathfrak C(T)_{prim}^*$, are equal.

  Let $C'\in\mathfrak C(T')_{prime}^*$ and $C=C'\cap G$. If $C'\not\supset G'\setminus G$ then $C'=C$, hence, $\lfloor\frac{|C|}2\rfloor=\lfloor\frac{|C'|}2\rfloor$ and $b(g,\chi_C)=b'(g,\chi_{C'})$ for any $g\in G$. Thus, the correspondent terms of the tribes $C$ and $C'$ in $\tilde P^{\mathfrak C}_{T}(g)$ and $\tilde P^{\mathfrak C}_{T'}(g)$ are equal.

  Let $C'\supset G'\setminus G$. Since $C'\ne C_0(T')$, the operation $f$ is of type $M_3$. Then $|C'|=|C|+2$ and for any $g\in G$ $b'(g,\chi_{C'})=b(g,\chi_C)+ b(g,s)$. Hence, for any $g\in G$

  \begin{multline*}
  b'(g,\chi_{C'})-\left\lfloor\frac{|C'|}2\right\rfloor\cdot b'(g,s)=\\
  b(g,\chi_C)+ b(g,s)-\left\lfloor\frac{|C|}2\right\rfloor\cdot b(g,s)-b(g,s)=\\
  b(g,\chi_C)-\left\lfloor\frac{|C|}2\right\rfloor\cdot b(g,s).
  \end{multline*}
  Thus, the correspondent terms of the tribes $C$ and $C'$ in $\tilde P^{\mathfrak C}_{T}(g)$ and $\tilde P^{\mathfrak C}_{T'}(g)$ are equal.
\end{proof}

\begin{definition}\label{def:reduced_stable_parity_functor}
The reduces parity functor $\tilde P^{st}=\tilde P^{\mathfrak C_\infty}$ associated with the stable tribal system $\mathfrak C_\infty$ is called the {\em reduced stable parity functor}.
\end{definition}

\begin{example}\label{exa:reduced_stable_parity_functor}
Consider the based matrix $T$ with coefficients in $\Z_2$ from Example~\ref{exa:stable_parity_functor}. One of primitive based matrices homologus to $T$ is $T_\bullet=(\{s,7,8\},s,b_\bullet)$ where $b_\bullet$ is defined by the matrix $B_\bullet$:

$$
B_\bullet=
\begin{blockarray}{ccc}
s & \mathbf{7} & \mathbf{8}\\
\begin{block}{(c|c|c)}
  0 & 0 & 0\\
  \BAhline
  0 & 0 & 1\\
  \BAhline
  0 & 1 & 0\\
\end{block}
\end{blockarray}.
$$

Thus, the primitive tribes in $\mathfrak C_\infty(T)$ are $\{s\}$, $C_1=\{5,6,7\}$ and $C_2=\{8\}$. The zero tribe $C_0=\{3,4\}$ is not primitive. The element $\tilde b(s)$ is $0$. Then the coefficient group $\tilde A^{st}(T)$ of the reduced stable  parity functor is isomorphic to $\Z_2\oplus\Z_2\oplus\Z_2$ (the first summand is for $\{s\}$). The parity values correspond to the elements in the first and the last two colums of the matrix $B_3$. We extracted those columns to the matrix $\tilde B_3$.

$$
\tilde B_3=
\left(
\begin{array}{c|c|c}
 0 &    0  & 0\\
 \hline
 0 &   0  & 0\\
 0 &     0  & 0\\
 \hline
 0 &    0  & 0\\
 0 &    0  & 0\\
 \hline
 0 &    0  & 1\\
 0 &     0  & 1\\
 0 &     0  & 1\\
 \hline
 0 &    1  & 0
\end{array}
\right).
$$

Thus, the parities of the elements are
\begin{gather*}
\tilde P^{st}(\mathbf{1})=\tilde P^{st}(\mathbf{2})=\tilde P^{st}(\mathbf{3})=\tilde P^{st}(\mathbf{4})=(0,0,0),\\
\tilde P^{st}(\mathbf{5})=\tilde P^{st}(\mathbf{6})=\tilde P^{st}(\mathbf{7})=(0,0,1),\quad \tilde P^{st}(\mathbf{8})=(0,1,0).
\end{gather*}
\end{example}

\begin{remark}
Although the coefficient groups $\tilde A^{\mathfrak C}(T)$ of the reduced  parity functor $\tilde P^{\mathfrak C}$ are all isomorphic we can not state that the parity functor defines a parity with coefficients in $\tilde A^{\mathfrak C}(T)$ for some fixed $T$.

Consider the following example. Let $T=({s,x,y},s,b)$ be a based matrix with coefficients in $\Z_2$ where $b$ is determined be the matrix

$$
B(T)=
\begin{blockarray}{ccc}
s & x & y\\
\begin{block}{(c|cc)}
  0 & 0 & 0\\
  \BAhline
  0 & 0 & 1\\
  0 & 1 & 0\\
\end{block}
\end{blockarray}.
$$

The based matrix $T$ is primitive and has two primitive stable tribes $\{x\},\{y\}\in \mathfrak C_\infty(T)_{prim}$.

Replace the element $x$ with an element $u$ by applying operations $M_3$ and $M_3^{-1}$ as shown below. Thus, we get a based matrix $T'=(\{s,u,y\},s,b')$.

$$
\begin{blockarray}{ccc}
s & x & y\\
\begin{block}{(c|cc)}
  0 & 0 & 0\\
  \BAhline
  0 & 0 & 1\\
  0 & 1 & 0\\
\end{block}
\end{blockarray}
\quad \Longrightarrow \quad
\begin{blockarray}{ccccc}
s & x & p & u & y\\
\begin{block}{(c|cccc)}
  0 & 0 & 0 & 0 & 0\\
  \BAhline
  0 & 0 & 0 & 0 & 1\\
  0 & 0 & 0 & 0 & 1\\
  0 & 0 & 0 & 0 & 1\\
  0 & 1 & 1 & 1 & 0\\
\end{block}
\end{blockarray}
\quad \Longrightarrow \quad
\begin{blockarray}{ccc}
s & u & y\\
\begin{block}{(c|cc)}
  0 & 0 & 0\\
  \BAhline
  0 & 0 & 1\\
  0 & 1 & 0\\
\end{block}
\end{blockarray}
$$

Analogously, replace $y$ with $v$. Then we get a sequence of transformations $f_1\colon T\to T_1=(\{s,u,v\},s,b_1)$. The primitive stable tribes of $T_1$ are $\{u\}$ and $\{v\}$. The transformation $f_1$ identifies $x$ with $u$ and $y$ with $v$.

On the other hand, we can replace $x$ with $v$ and $y$ with $u$ and get a based matrix $T_2=(\{s,u,v\},s,b_2)$. The transformation $f_2$ from $T$ to $T_2$ identifies the primitive tribes $x$ with $v$ and $y$ with $u$.

But the based matrices $T_1$ and $T_2$ coincide because they have the same underlying set $\{s,u,v\}$ and $b_1=b_2$. Thus, there is no canonical way to identify primitive tribes of homologous based matrices. The reason is we have a nontrivial automorphism of the based matrix $T$. In general case we can assign a {\em monodromy}, i.e. a sequence of transformations from a based matrix to itself, which interchanges the primitive tribes according to any given automorphism of the based matrix. On the other hand, any monodromy of a primitive based matrix induces an automorphism of the based matrix~\cite{T}.

Thus, we need to factorise by isomorphisms if we want to get an invariant coefficient group for a parity.
\end{remark}

\subsection{Reduced parity}

Let $T_0=(G_0,s_0,b_0)$ be a based matrix with coefficients in $H$ and let $\mathfrak C$ be a tribal system defined on the based matrices $T=(G,s,b)$ homologous to $T_0$.

Choose a primitive based matrix $T_\bullet=(G_\bullet,s,b_\bullet)$ homologous to $T_0$. The tribes in $G_\bullet$ correspond to the primitive tribes of any based matrix $T=(G,s,b)$ homologous to $T_\bullet$. Let $Aut(T_\bullet)$ be the automorphism group of the based matrix $T_\bullet$. Note that the group $Aut(T_\bullet)$ considered up to isomorphism does not depend on the choice of a primitive based matrix in the homology class.

Let us define $\bar{\mathfrak C}(T_\bullet)$ to be the finest partition such that $\mathfrak C(T_\bullet)\succ \bar{\mathfrak C}(T_\bullet)$ and $\phi({\mathfrak C}(T_\bullet))=\bar{\mathfrak C}(T_\bullet)$ for any $\phi\in Aut(T_\bullet)$. Note that $\{s\}\in \bar{\mathfrak C}(T_\bullet)$. The partition $\bar{\mathfrak C}(T_\bullet)$ defines a partition on the set ${\mathfrak C}(T_\bullet)_{prim}={\mathfrak C}(T_\bullet)$. By Proposition~\ref{prop:primitive_tribe_invariance} this partition can be lifted to some partition of the set $\bar{\mathfrak C}(T)_{prim}$ for any based matrix $T$ homologous to $T_\bullet$, and the partition does not depend on the sequence of transformations connecting $T$ and $T_\bullet$.

For a based matrix $T=(G,s,b)$ denote $\bar{\mathfrak C}(T)_{prim}^*=\bar{\mathfrak C}(T)_{prim}\setminus\{\bar C_0(T)\}$ where $\bar C_0(T)$ is the element of the partition $\bar{\mathfrak C}(T)_{prim}$ which includes the zero tribe $C_0(T)$ if it is primitive, and $\bar C_0(T)=\emptyset$ if $C_0(T)$ is not primitive. Let
$\bar A^{\mathfrak C}(T)=H[\bar{\mathfrak C}(T)_{prim}^*]/\langle\bar b(s)\rangle$
where
\begin{equation}\label{eq:bar_bs_element}
\bar b(s)=\sum_{\bar C\in\bar{\mathfrak C}(T)_{prim}^*} b(s,\chi_{\bar C})\cdot \bar C.
\end{equation}
Define the maps
$\bar P^{\mathfrak C}_T\colon G\to \bar A^{\mathfrak C}(T)$ by the formula
\begin{equation}\label{eq:reduced_parity}
\bar P^{\mathfrak C}_T(g)= \sum_{\bar C\in\bar{\mathfrak C}(T)_{prim}^*} \left(b(g,\chi_{\bar C})-\left\lfloor\frac{|\bar C|}2\right\rfloor\cdot b(g,s)\right)\cdot \bar C.
\end{equation}

For an operation $f\colon T\to T'$, $T=(G,s,b)$, $T'=(G',s,b')$, of type $M_1, M_2$ or $M_3$  we define a homomorphism $\bar A^{\mathfrak C}(f)\colon \bar A^{\mathfrak C}(T)\to \bar A^{\mathfrak C}(T')$ by the formula
\begin{equation}\label{eq:reduced_parity_coefficients}
\bar A^{\mathfrak C}(f)\left(\sum_{\bar C\in\bar{\mathfrak C}(T)_{prim}^*}\lambda_{\bar C}\cdot \bar C\right)=\sum_{\bar C\in\bar{\mathfrak C}(T)_{prim}^*}\lambda_{\bar C}\cdot f(\bar C)
\end{equation}
where $f(\bar C)\in \bar{\mathfrak C}(T')_{prim}^*$ is the unique class such that $f(\bar C)\cap G = \bar C$.

\begin{theorem}\label{thm:reduced_parity_tribal_system}

\begin{enumerate}
  \item Let $T=(G,s,b)$ be a based matrix homologous to $T_\bullet$ and $f\colon T\to T_\bullet$ be a transformation chain connecting these based matrices. Then the map $\bar A^{\mathfrak C}(f)\colon \bar A^{\mathfrak C}(T)\to \bar A^{\mathfrak C}(T_\bullet)$ is an isomorphism which does not depend on the choice of the transformation $f$.
  \item The family of maps $\bar p^{\mathfrak C}_T=\bar A^{\mathfrak C}(f)\circ \bar P^{\mathfrak C}_T\colon G\to \bar A^{\mathfrak C}(T_\bullet)$ is a parity with coefficients in $\bar A^{\mathfrak C}(T_\bullet)$.
\end{enumerate}
\end{theorem}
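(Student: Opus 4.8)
The plan is to deduce the statement from the already–established theory of the reduced parity functor together with the automorphism–invariance built into $\bar{\mathfrak C}$. First I would observe that $\bar P^{\mathfrak C}$ is, in its own right, the $Aut$-coarsened analogue of the reduced parity functor: the data $\bar A^{\mathfrak C}(T)$, $\bar P^{\mathfrak C}_T$ and $\bar A^{\mathfrak C}(f)$ are obtained from $\tilde A^{\mathfrak C}(T)$, $\tilde P^{\mathfrak C}_T$, $\tilde A^{\mathfrak C}(f)$ of Theorem~\ref{thm:reduced_parity_functor_based_matrices} by replacing the primitive tribes $\mathfrak C(T)_{prim}^*$ with their coarsening $\bar{\mathfrak C}(T)_{prim}^*$. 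Since Proposition~\ref{prop:primitive_tribe_invariance} and Corollary~\ref{cor:zero_tribe_invariance} give, for an elementary operation $f$ of type $M_1,M_2,M_3$, a bijection of primitive tribes that carries the zero tribe to the zero tribe and preserves $b(s,\cdot)$, the same properties pass to the $Aut$-invariant blocks $\bar{\mathfrak C}(T)_{prim}^*$. Hence the computation closing the proof of Theorem~\ref{thm:reduced_parity_functor_based_matrices} (which uses only $|\bar C|$, the relation $b'(g,\chi_{\bar C'})=b(g,\chi_{\bar C})+b(g,s)$ for the $M_3$-block absorbing $G'\setminus G$, and $f(\bar b(s))=\bar b(s)$) goes through verbatim and shows that each elementary $\bar A^{\mathfrak C}(f)$ is an isomorphism with $\bar P^{\mathfrak C}_{T'}|_G=\bar A^{\mathfrak C}(f)\circ \bar P^{\mathfrak C}_T$.

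For part 1 I would define $\bar A^{\mathfrak C}(f)$ for an arbitrary transformation chain $f\colon T\to T_\bullet$ as the composite of the elementary isomorphisms above (and their inverses for operations $M_i^{-1}$); being a composite of isomorphisms it is an isomorphism. The real content is independence of the chain. Given two chains $f,f'\colon T\to T_\bullet$, the discrepancy is the monodromy $m=f'\circ f^{-1}\colon T_\bullet\to T_\bullet$, and it suffices to prove $\bar A^{\mathfrak C}(m)=\mathrm{id}$. Here I use the fact, quoted from~\cite{T}, that any monodromy of a primitive based matrix realises an automorphism $\phi_m\in Aut(T_\bullet)$; the primitive-tribe tracking of Proposition~\ref{prop:primitive_tribe_invariance} then identifies the permutation of $\mathfrak C(T_\bullet)_{prim}$ induced by $m$ with the permutation of tribes induced by $\phi_m$. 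By the construction of $\bar{\mathfrak C}(T_\bullet)$ as the finest coarsening of $\mathfrak C(T_\bullet)$ on which every automorphism fixes each block setwise, $\phi_m$ maps every class $\bar C\in\bar{\mathfrak C}(T_\bullet)_{prim}^*$ to itself; since $\bar A^{\mathfrak C}$ records only classes (the generator attached to $\bar C$ is $\bar C$ itself, and both $|\bar C|$ and $\chi_{\bar C}$ are $\phi_m$-invariant), we get $\bar A^{\mathfrak C}(m)(\bar C)=\bar C$, i.e. $\bar A^{\mathfrak C}(m)=\mathrm{id}$, whence $\bar A^{\mathfrak C}(f')=\bar A^{\mathfrak C}(f)$.

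For part 2 I would check the four axioms of Definition~\ref{def:parity_based_matrices} for the fixed-coefficient family $\bar p^{\mathfrak C}_T=\bar A^{\mathfrak C}(f)\circ\bar P^{\mathfrak C}_T$. Axioms (P1)–(P3) already hold for $\bar P^{\mathfrak C}_T$ by a direct computation in $\bar A^{\mathfrak C}(T)$: if $g$ is annihilating then every $b(g,\chi_{\bar C})$ and $b(g,s)$ vanishes; if $g$ is core, or if $\sum_i b(g_i,h)=b(s,h)$ holds for all $h$, then summing over $h=\chi_{\bar C}$ and $h=s$ (with $b(s,s)=0$) collapses the value to $\bar b(s)$, which is $0$ in the quotient. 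The homomorphism $\bar A^{\mathfrak C}(f)$ preserves these linear identities, so (P1)–(P3) pass to $\bar p^{\mathfrak C}_T$. For (P0), let $T'$ arise from $T$ by adding an annihilating or core element or a complementary pair via an elementary operation $e\colon T\to T'$; choosing the chain $f=f'\circ e\colon T\to T_\bullet$ and invoking independence from part 1 gives $\bar A^{\mathfrak C}(f)=\bar A^{\mathfrak C}(f')\circ\bar A^{\mathfrak C}(e)$, so for $g\in G$ one computes $\bar p^{\mathfrak C}_T(g)=\bar A^{\mathfrak C}(f')\bigl(\bar A^{\mathfrak C}(e)(\bar P^{\mathfrak C}_T(g))\bigr)=\bar A^{\mathfrak C}(f')(\bar P^{\mathfrak C}_{T'}(g))=\bar p^{\mathfrak C}_{T'}(g)$, the middle equality being the functoriality $\bar P^{\mathfrak C}_{T'}|_G=\bar A^{\mathfrak C}(e)\circ\bar P^{\mathfrak C}_T$ from the first paragraph.

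The main obstacle is the independence-of-chain step of part 1: everything else is a transcription of the reduced-parity-functor argument or a one-line homomorphism computation, but triviality of the monodromy action is precisely what forced the introduction of $\bar{\mathfrak C}$. It rests on the nontrivial correspondence from~\cite{T} between monodromies and automorphisms of a primitive based matrix, together with the compatibility of that correspondence with the tribe-tracking of Proposition~\ref{prop:primitive_tribe_invariance}. I would take particular care to verify that the permutation of $\mathfrak C(T_\bullet)_{prim}$ produced by the formal $M_i$-tracking along $m$ genuinely coincides with the action of $\phi_m$ on tribes; this requires the tribal system $\mathfrak C$ to be invariant under isomorphisms of based matrices (automatic for the stable system $\mathfrak C_\infty$, whose construction is canonical), since only then does the $Aut(T_\bullet)$-invariance of $\bar{\mathfrak C}$ translate into the triviality of $\bar A^{\mathfrak C}(m)$.
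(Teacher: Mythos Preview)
Your proposal is correct and follows essentially the same route as the paper: both argue that $\bar P^{\mathfrak C}$ is a parity functor by transcribing the proof of Theorem~\ref{thm:reduced_parity_functor_based_matrices} to the $Aut$-coarsened partition, and both reduce chain-independence in part~1 to the observation that a monodromy $f'\circ f^{-1}$ of the primitive $T_\bullet$ is (by Turaev's result~\cite{T}) an automorphism, hence acts trivially on $\bar{\mathfrak C}(T_\bullet)_{prim}^*$ by design. Your final paragraph is in fact more scrupulous than the paper's proof: you flag that the monodromy--automorphism correspondence only kills $\bar A^{\mathfrak C}(m)$ provided the tribal system $\mathfrak C$ itself is isomorphism-invariant, a hypothesis the paper uses silently.
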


The parity $\bar p^{\mathfrak C}$ is called the {\em reduced parity associated with the tribal system $\mathfrak C$}.

\begin{proof}
1. We can prove that $\bar A^{\mathfrak C}(f)$ using the arguments of Theorem~\ref{thm:reduced_parity_functor_based_matrices}. Let $f_i\colon T\to T_\bullet$ be two sequences connecting $T$ and $T_\bullet$. Then $\phi=f_2\circ f_1^{-1}$ is a sequence of transformations of $T_\bullet$ to itself. Since $T_\bullet$ is primitive, $\phi$ is an automorphism of $T_\bullet$. Hence, $\phi$ acts trivially on $\bar{\mathfrak C}(T_\bullet)_{prim}^*$. Therefore $\phi$ induces the identity automorphism of $\bar A^{\mathfrak C}(T_\bullet)$, i.e. $\bar A^{\mathfrak C}(f_2\circ f_1^{-1})=id$. Hence, $\bar A^{\mathfrak C}(f)=\bar A^{\mathfrak C}(f_2\circ f_1^{-1})\cdot \bar A^{\mathfrak C}(f_1)=\bar A^{\mathfrak C}(f_1)$.

2. Following the arguments of Theorem~\ref{thm:reduced_parity_functor_based_matrices} we can show that $\bar P^{\mathfrak C}$ is a parity functor. We need only to check the equality $\bar p^{\mathfrak C}_{T'}|_G=\bar p^{\mathfrak C}_T$ for any based matrices $T=(G,s,b)$ and $T'=(G',s,b')$ connected by an operation $f\colon T\to T'$ of type $M_i$, $i=1,2,3$.

Let $f'\colon T'\to T_\bullet$ be a sequence of transformations $M_i^{\pm 1}$, $i=1,2,3$ from $T'$ to $T_\bullet$. By definition
\begin{multline*}
\bar p^{\mathfrak C}_{T'}|_G=\bar A^{\mathfrak C}(f')\circ\bar P^{\mathfrak C}_{T'}|_G=\bar A^{\mathfrak C}(f')\circ\bar A^{\mathfrak C}(f)\circ\bar P^{\mathfrak C}_{T}=\bar A^{\mathfrak C}(f'\circ f)\circ\bar P^{\mathfrak C}_{T}=\bar p^{\mathfrak C}_{T}
\end{multline*}
where the second equality is the parity functor property.
\end{proof}

\begin{definition}
The reduced parity $\hat p^{st}=\hat p^{\mathfrak C_\infty}$ associated with the stable tribal system $\mathfrak C_\infty$ is called the {\em reduced stable parity}.
\end{definition}

\begin{example}
Consider the based matrix $T=(\{s,1,2,3,4,5,6,7,8\},s,b)$ from Example~\ref{exa:stable_parity_functor}. A primitive based matrix $T_\bullet=(\{s,7,8\},s,b_\bullet)$ is given in Example~\ref{exa:reduced_stable_parity_functor}. It has three primitive stable tribes $\{s\}$, $\{7\}$ and $\{8\}$. The automorphism group of $T_\bullet$ is $Aut(T_\bullet)=\Z_2$, it interchanges the primitive tribes $7$ and $8$. Then the partition $\bar{\mathfrak C}_\infty(T_\bullet)_{prim}$ consists of the element $\{7,8\}$ and $\{s\}$. The coefficient group $\bar A^{st}$ is equal to $\Z_2\oplus\Z_2$ since the zero tribe is not primitive and $\bar b(s)=0$.

The induced partition for $T$ will be $\bar{\mathfrak C}_\infty(T)_{prim}=\{\{s\},\{5,6,7,8\}\}$. It corresponds to the matrix $\bar B_3$ which is obtained from $\tilde B_3$ by taking the sum of the last two columns.

$$
\bar B_3=
\left(
\begin{array}{c|c}
 0   & 0\\
 \hline
 0   & 0\\
 0   & 0\\
 \hline
 0   & 0\\
 0   & 0\\
 \hline
 0   & 1\\
 0   & 1\\
 0  & 1\\
 \hline
 0 &    1
\end{array}
\right).
$$

Then the reduced stable parity on $T$ is equal to
\begin{gather*}
\bar p^{st}(\mathbf{1})=\bar p^{st}(\mathbf{2})=\bar p^{st}(\mathbf{3})=\bar p^{st}(\mathbf{4})=(0,0),\\
\bar p^{st}(\mathbf{5})=\bar p^{st}(\mathbf{6})=\bar p^{st}(\mathbf{7})=\bar p^{st}(\mathbf{8})=(0,1).
\end{gather*}
\end{example}

\subsection{Reduced stable parity for virtual and flat knot}

Let $\mathcal K$ be a virtual or flat knot. Then we can define {\em reduced stable parity} on the diagrams of the knot $\mathcal K$ as follows. Let $D$ be a diagram of $\mathcal K$. Consider the based matrix $T(D)$ of the diagram $D$ and a primitive based matrix $T_\bullet(D)$ homologous to $T(D)$.
Denote the group $\bar A^{st}(T_\bullet(D))$ by $\bar A^{st}(\mathcal K)$.

\begin{proposition}\label{prop:reduced_stable_parity_virtual_knots}
The family of  the maps $\bar p^{st}_{T(D)}\colon \V(D)\to\bar A^{st}(\mathcal K)$ is a parity on diagrams of the knot $\mathcal K$ with coefficients in the group $\bar A^{st}(\mathcal K)$.
\end{proposition}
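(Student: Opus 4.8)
The plan is to reduce Proposition~\ref{prop:reduced_stable_parity_virtual_knots} to the already-established machinery of the reduced parity associated with a tribal system, namely Theorem~\ref{thm:reduced_parity_tribal_system}. The key observation is that the assignment $D \mapsto T(D)$ sending a diagram to its based matrix is functorial in the sense of Lemma~\ref{lem:funtoriality_based_matrix}: equivalent diagrams give homologous based matrices, and more precisely each Reidemeister move induces a definite sequence of homology operations $M_i^{\pm 1}$ on based matrices. So first I would fix the knot $\mathcal K$, choose a diagram $D_0$, form $T(D_0)$, and pass to a primitive based matrix $T_\bullet = T_\bullet(D_0)$; by Lemma~\ref{lem:unique_primitive_based_matrix} this $T_\bullet$ is unique up to isomorphism and hence $\bar A^{st}(\mathcal K) = \bar A^{st}(T_\bullet)$ is well defined independently of the chosen diagram. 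The stable tribal system $\mathfrak C_\infty$ of Proposition~\ref{prop:stable_tribal_system} is defined on all based matrices, in particular on the class of those homologous to $T_\bullet$, so the reduced parity $\bar p^{st} = \bar p^{\mathfrak C_\infty}$ is already constructed and, by Theorem~\ref{thm:reduced_parity_tribal_system}(2), is a parity with coefficients in $\bar A^{st}(T_\bullet)$ on the category of based matrices homologous to $T_\bullet$.

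The bulk of the argument is then to transport the two parity axioms from based matrices to diagrams through the correspondence $\V(D) \hookrightarrow G(D)^\circ$. For each diagram $D$ of $\mathcal K$, I would set $\bar p^{st}_{T(D)}(v) = \bar p^{\mathfrak C_\infty}_{T(D)}(v)$ for $v \in \V(D)$, using that crossings of $D$ are exactly the non-$s$ generators of $T(D)$. Given a Reidemeister move $D \to D'$ with induced partial bijection $f_* \colon \V(D) \to \V(D')$, I would invoke Lemma~\ref{lem:funtoriality_based_matrix} to realize the move as a sequence of operations $M_i^{\pm 1}$ connecting $T(D)$ to $T(D')$. The parity axiom for the functor $\bar p^{\mathfrak C_\infty}$ then yields $\bar p^{st}_{T(D')}(f_*(v)) = \bar p^{st}_{T(D)}(v)$ for every surviving crossing $v$, which is precisely the first parity axiom (invariance of values). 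For the second axiom (the sum of parities of crossings created or destroyed in a move vanishes), I would examine each move type: a first Reidemeister move adds a single crossing that is annihilating or core in $T(D')$, so axiom (P1) of the functor gives it parity zero; a second Reidemeister move adds two crossings that are complementary, so axiom (P2) gives parities summing to zero; the third Reidemeister move preserves crossings, and the relevant triple relation is governed by axiom (P3).

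The main obstacle I anticipate is verifying that the crossings born in Reidemeister moves have exactly the algebraic status (annihilating, core, or complementary) claimed, and that the two crossings of a second move land in the same tribe so that the reduced coefficient group treats them consistently. This requires a careful local homological computation of the intersection numbers $D^l_c \cdot D^l_{c'}$ and $D^l_c \cdot D$ from~\eqref{eq:based_matrix_flat_knot} for the new crossings, comparing the halves before and after the move; the tribal-system axioms of $\mathfrak C_\infty$ were designed precisely so that complementary pairs share a tribe, which is what makes the reduced construction descend. A secondary subtlety is that the coefficient group is pinned to the fixed primitive matrix $T_\bullet$ rather than to each $T(D)$: here I would use Theorem~\ref{thm:reduced_parity_tribal_system}(1), which guarantees that the isomorphism $\bar A^{st}(f)\colon \bar A^{st}(T(D)) \to \bar A^{st}(T_\bullet)$ induced by a transformation chain is independent of the chain, so the identifications are canonical and the family $\bar p^{st}_{T(D)}$ genuinely takes values in the single group $\bar A^{st}(\mathcal K)$. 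Once the correspondence between moves and operations and the status of new crossings are nailed down, the proposition follows by directly quoting Theorem~\ref{thm:reduced_parity_tribal_system}.
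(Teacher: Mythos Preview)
Your proposal is correct and follows essentially the same route as the paper: the paper's proof is the single line ``follows from Lemma~\ref{lem:funtoriality_based_matrix} and Theorem~\ref{thm:reduced_parity_tribal_system},'' and your plan is a faithful unpacking of that sentence. The ``main obstacle'' you flag---that Reidemeister moves create precisely annihilating, core, or complementary elements---is already part of the content of Lemma~\ref{lem:funtoriality_based_matrix} (Turaev), so you need not reprove it here.
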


Proposition~\ref{prop:reduced_stable_parity_virtual_knots} follows from Lemma~\ref{lem:funtoriality_based_matrix} and Theorem~\ref{thm:reduced_parity_tribal_system}.

\begin{definition}
The parity $\bar p^{st}$ from Proposition~\ref{prop:reduced_stable_parity_virtual_knots} is called the {\em reduced stable parity} on virtual (flat) knots.
\end{definition}

\begin{example}
Consider the flat knot diagram $D$ in Fig.~\ref{fig:flat_knot_diagram1}. Then $T(D)=(\{s,1,2,3\}, s, b)$ where $b$ is defined by the matrix $B$ with coefficients in $\Z_2$.

\begin{figure}[h]
\centering
\raisebox{-0.5\height}{\includegraphics[width=0.15\textwidth]{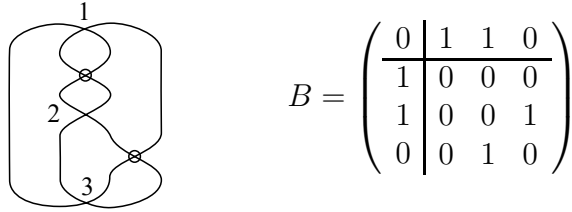}}\qquad\qquad
$
B= \left(\begin{array}{c|ccc}
0 & 1 & 1 & 0 \\
\hline
1 & 0 & 0 & 0 \\
1 & 0 & 0 & 1 \\
0 & 0 & 1 & 0
\end{array}\right)
$

\caption{Flat knot diagram and the corresponding based matrix}\label{fig:flat_knot_diagram1}
\end{figure}

The based matrix $T(D)$ is primitive. Its stable tribes are
$${\mathfrak C}_\infty = \{\{s\},\{1\},\{2\},\{3\}\}.
$$
Since the automorphism group $Aut(T(D)$ is trivial and the zero tribe is empty, the partition $\bar{\mathfrak C}_\infty(T(D))_{prim}$ coincides with ${\mathfrak C}_\infty$. Thus, we have four reduced stable classes. The element $\bar b(s)=(0,1,1,0)\in\Z_2[{\mathfrak C}_\infty]$. Hence,
 $\bar A^{st}=\Z_2[{\mathfrak C}_\infty]/\langle\bar b(s)\rangle\simeq\Z_2^3$ where the isomorphism is induced by the map $$\lambda_s\cdot\mathbf{s}+\lambda_1\cdot\mathbf{1}+\lambda_2\cdot\mathbf{2}+\lambda_3\cdot\mathbf{3}\mapsto (\lambda_s,\lambda_1+\lambda_2,\lambda_3).$$

 The parity values are
 \begin{gather*}
   \bar p^{st}(1)=\left( 1, \left(0-\left\lfloor\frac 12\right\rfloor 1\right)+\left(0-\left\lfloor\frac 12\right\rfloor 1\right),0-\left\lfloor\frac 12\right\rfloor 1\right)=(1,0,0),\\
   \bar p^{st}(2)=\left( 1, \left(0-\left\lfloor\frac 12\right\rfloor 1\right)+\left(0-\left\lfloor\frac 12\right\rfloor 1\right),1-\left\lfloor\frac 12\right\rfloor 1\right)=(1,0,1),\\
   \bar p^{st}(3)=\left( 0, \left(0-\left\lfloor\frac 12\right\rfloor 0\right)+\left(1-\left\lfloor\frac 12\right\rfloor 0\right),0-\left\lfloor\frac 12\right\rfloor 0\right)=(0,1,0).
\end{gather*}

\end{example}

\appendix

\section{Reduced stable parity of knots of complexity $\le 4$}

Using a Mathematica program, we calculated the reduced stable parity for the diagrams of Green's table~\cite{Gr} with the number of crossings less or equal $4$.

It turns out that the based matrices of all these diagrams are either primitive or reduce to the trivial matrix $(0)$.

In the last case there are no primitive tribes and the reduced stable parity $\bar p^{st}$ coincides with the index parity $ip$. This situation occurs for the knots
\begin{gather*}
2.1, 3.2, 3.5, 3.6, 3.7, 4.3, 4.6, 4.12, 4.25, 4.27, 4.36, 4.37, 4.40,\\
 4.41, 4.43, 4.44, 4.46, 4.53, 4.54, 4.61, 4.64, 4.65, 4.68, 4.73,\\
 4.74, 4.75, 4.82, 4.84, 4.86, 4.91, 4.92, 4.94, 4.95, 4.96, 4.99, \\
 4.100, 4.101, 4.102, 4.104, 4.105, 4.106, 4.108.
\end{gather*}

The knots $3.1, 3.3, 3.4$ have three primitive non zero tribes corresponding to the crossings and the coefficient group $\bar A^{st}$ is isomorphic to $\Z^3$.

For the knots
\begin{gather*}
4.2, 4.5, 4.7, 4.10, 4.11, 4.14, 4.15, 4.17, 4.19, 4.20, 4.21, 4.22, 4.23, 4.24,\\
4.26, 4.28, 4.29, 4.30, 4.32, 4.34, 4.35, 4.38, 4.39, 4.42, 4.45, 4.47, 4.48,\\
4.49, 4.50, 4.56, 4.57, 4.59, 4.62, 4.63, 4.66, 4.67, 4.70, 4.71, 4.76, 4.78, \\
4.79, 4.80, 4.81, 4.83, 4.87, 4.88, 4.93, 4.97, 4.103,
\end{gather*}
each of 4 crossings determines a primitive non zero tribe, and the coefficient group $\bar A^{st}$ of the reduced stable parity is isomorphic to $\Z^4$.

The knots $4.9, 4.16, 4.33, 4.52, 4.58, 4.72$ have 3 primitive tribes, and $\bar A^{st}=\Z^4$ for them.

For example, the based matrix of the knot $4.9$ is equal to
\[
\left(
\begin{array}{c|cccc}
 0 & 1 & 0 & 0 & -1 \\
 \hline
 -1 & 0 & -1 & -1 & 0 \\
 0 & 1 & 0 & -1 & -1 \\
 0 & 1 & 1 & 0 & -1 \\
 1 & 0 & 1 & 1 & 0 \\
\end{array}
\right).
\]
The crossings $1$ and $4$ belongs to one tribe because the corresponding rows (or columns) are proportional. Note that $1$ and $4$ are not complementary. The parity matrix is given below.
\[
\left(
\begin{array}{c|ccc}
 0 & 0 & 0 & 0 \\
 \hline
 -1 & 1 & -1 & -1 \\
 0 & 0 & 0 & -1 \\
 0 & 0 & 1 & 0 \\
 1 & -1 & 1 & 1 \\
\end{array}
\right)
\]
The first column corresponds to $s$ (and the index parity), the other columns correspond to non zero primitive tribes. The first row presents the vector $\bar b(s)$, the other rows present the parity values of the crossings.

The knots $4.85, 4.89, 4.90, 4.98, 4.107$ have 4 primitive tribes, and $\bar A^{st}=\Z^4\oplus\Z_2$ for them.

For example, the knot $4.85$ has the based matrix
\[
\left(
\begin{array}{c|cccc}
 0 & 2 & -2 & -2 & 2 \\
 \hline
 -2 & 0 & -2 & -1 & 0 \\
 2 & 2 & 0 & 0 & 3 \\
 2 & 1 & 0 & 0 & 2 \\
 -2 & 0 & -3 & -2 & 0 \\
\end{array}
\right)
\]
which coincides with the parity matrix. Each vertex defines a primitive tribe. The vector $\bar b(s)=(0,2,-2,-2,2)$ is divisible by $2$. Hence, the coefficient group is equal to
\[
\bar A^{st}=\Z[\bar{\mathfrak C}_\infty(T(D))_{prim}]/\left<\bar b(s)\right>=\Z^5/\left<(0,2,-2,-2,2)\right>=\Z^4\oplus\Z_2.
\]

The knots $4.13, 4.18, 4.31, 4.51, 4.60, 4.69$ have 3 primitive tribes, the symmetry group $Aut(T_\bullet)=\Z_2$ which acts trivially on the primitive tribes, and $\bar A^{st}=\Z^3$.

For example, the knot $4.13$ has the based matrix
\[
\left(
\begin{array}{c|cccc}
 0 & -1 & 0 & 0 & 1 \\
 \hline
 1 & 0 & 1 & 1 & 1 \\
 0 & -1 & 0 & 0 & 0 \\
 0 & -1 & 0 & 0 & 0 \\
 -1 & -1 & 0 & 0 & 0 \\
\end{array}
\right)
\]
The primitive tribes are $\{1\}, \{2,3\}$ and $\{4\}$ (the rows the second and third crossings coincide). The symmetry group acts by transposition of the second and the third crossings, and induces trivial action on the primitive tribes. The parity matrix is
\[
\left(
\begin{array}{c|ccc}
 0 & -1 & 0 & 1 \\
 \hline
 1 & 0 & 1 & 1 \\
 0 & -1 & 0 & 0 \\
 0 & -1 & 0 & 0 \\
 -1 & -1 & 1 & 0 \\
\end{array}
\right).
\]
Since the vector $\bar b(s)\ne 0$, the coefficient group is $\bar A^{st}=\Z^4/\Z=\Z^3$.

Finally, the knots $4.1, 4.4, 4.8, 4.55, 4.77$ have 4 primitive tribes and the symmetry group $Aut(T_\bullet)=\Z_2$ which interchange pairs of crossings. The coefficient group is equal to $\bar A^{st}=\Z^2\oplus\Z_2$.

For example, the knot $4.1$ has the based matrix
\[
\left(
\begin{array}{c|cccc}
 0 & 1 & -1 & 1 & -1 \\
 \hline
 -1 & 0 & -1 & 0 & 0 \\
 1 & 1 & 0 & 0 & 0 \\
 -1 & 0 & 0 & 0 & -1 \\
 1 & 0 & 0 & 1 & 0 \\
\end{array}
\right).
\]
Each crossing defines a primitive tribe. There is a unique nontrivial symmetry $(13)(24)$ of the based matrix. Hence, $\bar{\mathfrak C}(T)_{prim}=\{\{1,3\},\{2,4\}\}$. The parity matrix looks like
\[
\left(
\begin{array}{c|cc}
 0 & 2 & -2 \\
 \hline
 -1 & 1 & 0  \\
 1 & 0 & -1  \\
 -1 & 1 & 0  \\
 1 & 0 & -1  \\
\end{array}
\right).
\]
The coefficient groups is equal $\bar A^{st}=\Z^3/\left<(0,2,-2)\right>=\Z^2\oplus\Z_2$.


\end{document}